\documentclass[a4paper,leqno,10pt]{article}

%\usepackage{pb-diagram}
%\tolerance=1000
%\hbadness=10000
\raggedbottom
\hfuzz3pt
\usepackage{epsf,graphicx,epsfig}
\usepackage{amscd}
\usepackage{amsmath,latexsym,amssymb,amsthm}
\usepackage[nospace,noadjust]{cite}
\usepackage{textcomp}
\usepackage{setspace,cite}
\usepackage{lscape,fancyhdr,fancybox}
\usepackage{stmaryrd}
\usepackage[all,cmtip]{xy}
\usepackage{tikz}
\usepackage{cancel}
\usetikzlibrary{shapes,arrows,decorations.markings}
%\usepackage[hmarginratio=1:1, vmarginratio =5:6,
%textheight=22cm,bindingoffset=1.6cm, textwidth=14.6cm]{geometry}
\setlength{\unitlength}{0.4in}

\usepackage{graphicx}

\usepackage{color}
\usepackage{url}
\usepackage{enumerate}
\usepackage[mathscr]{euscript}
%\usepackage{showkeys}
%\input xy
%\xyoption{all}

\setlength{\topmargin}{-9mm}
\setlength{\textheight}{9.5in}
\setlength{\oddsidemargin}{-0.1in}
\setlength{\evensidemargin}{-0.1in}
\setlength{\textwidth}{6.5in}
\setlength{\textwidth}{6.5in}

\setcounter{tocdepth}{1}

  \theoremstyle{plain}

\swapnumbers
    \newtheorem{thm}{Theorem}[section]
    \newtheorem{prop}[thm]{Proposition}

    \newtheorem{corollary}[thm]{Corollary}
    
    \newtheorem{subsec}[thm]{}
\theoremstyle{definition}
    \newtheorem{defn}[thm]{Definition}
        \newtheorem{remark}[thm]{Remark}
    \newtheorem{exam}[thm]{Example}

    \newtheorem{notation}[thm]{Notation}

\theoremstyle{remark}

\title{}
\author{}
\date{}
\usepackage{amssymb}

\usepackage{hyperref}
\hypersetup{
	colorlinks,
	citecolor=blue,
	filecolor=black,
	linkcolor=blue,
	urlcolor=black
}

\begin{document}

\title{Compatible $L_\infty$-algebras}
%\author{Apurba Das \\ Indian Institute of Technology, Kanpur 208016, Uttar Pradesh, India \\ (e-mail: apurbadas348@gmail.com)}
%\author{Apurba Das and Ashis Mandal\\Department of Mathematics and Statistics,
%Indian Institute of Technology,\\ Kanpur 208016, Uttar Pradesh, India.\\apurbadas348@gmail.com~~~
%amandal@iitk.ac.in}
%\email{apurbadas348@gmail.com}

\author{Apurba Das \footnote{Department of Mathematics,
Indian Institute of Technology, Kharagpur 721302, West Bengal, India.} \footnote{Email: apurbadas348@gmail.com}}
%\address{Department of Mathematics and Statistics,
%Indian Institute of Technology, Kanpur 208016, Uttar Pradesh, India.}
%\email{apurbadas348@gmail.com}

%\author{Puja Mondal}
%\address{Acharya Girish Chandra Bose College, Kolkata 700009, West Bengal, India.}
%\email{pujam957@gmail.com}

%\subjclass[2010]{16E40, 16S80, 16W25, 18N25, 18N40.}

\maketitle
\begin{abstract}
A compatible $L_\infty$-algebra is a graded vector space together with two compatible $L_\infty$-algebra structures on it. Given a graded vector space, we construct a graded Lie algebra whose Maurer-Cartan elements are precisely compatible $L_\infty$-algebra structures on it. We provide examples of compatible $L_\infty$-algebras arising from Nijenhuis operators, compatible $V$-datas and compatible Courant algebroids.
We define the cohomology of a compatible $L_\infty$-algebra and as an application, we study formal deformations. Next, we classify `strict' and `skeletal' compatible $L_\infty$-algebras in terms of crossed modules and cohomology of compatible Lie algebras. Finally, we introduce compatible Lie $2$-algebras and find their relationship with compatible $L_\infty$-algebras.
\end{abstract}

\medskip

\medskip

%\noindent {\bf Keywords:} AssDer pair, Hochschild cohomology, Crossed extensions, Abelian extensions, Deformations, Homotopy derivations, Categorifications.

\noindent {\bf 2020 Mathematics Subject Classification:} 17B56, 18N40, 18N25.

\noindent {\bf Keywords:} Compatible Lie algebras, Compatible $L_\infty$-algebras, Deformations, $V$-datas, Courant algebroids, Compatible Lie $2$-algebras.

%\noindent {\bf 2010 MSC classifications:} 16E40, 16S80, 16W25, 18N25, 18N40.
%$17$A$30,$ $17$B$55$.

\medskip

%\textcolor{red}{Compatible Courant, compatible bialgebroid, compatible poisson}

%\textcolor{red}{Compatible Leibniz}

%\textcolor{red}{when two dgla's are compatible, example}

\tableofcontents

\noindent
\thispagestyle{empty}

%\maketitle

\section{Introduction}

Compatible algebraic and geometric structures appeared in various places of mathematics and mathematical physics. Among others, compatible Lie algebras are widely studied in the literature. A compatible Lie algebra is a triple $(\mathfrak{g}, [~,~], [~,~]')$ consisting of a vector space $\mathfrak{g}$ equipped with two Lie brackets $[~,~]$ and $[~,~]'$ satisfying some compatibility condition.
%\begin{align*}
%[[x,y],z]' + [[x,y]',z] = [[x,z], y]'+ [[x,z]', y] + [x, [y,z]]' + [x, [y,z]'], \text{ for } x, y, z \in \mathfrak{g}.
%\end{align*}
The compatibility is equivalent to the fact that the sum $[~,~] + [~,~]'$ is a Lie bracket on $\mathfrak{g}$. 
Compatible Lie algebras are closely related to Nijenhuis deformations of Lie algebras, classical Yang-Baxter equations, principal chiral fields, loop algebras over Lie algebras and elliptic theta functions \cite{golu1,golu2,golu3,nij-ric,Odesskii1}. Similarly, compatible associative algebras and their relations with associative Yang-Baxter equations, quiver representations and bialgebra theory are explored in \cite{mar,Odesskii2,Odesskii3,wu}. Recently, compatible Lie algebras and compatible associative algebras are studied in \cite{comp-lie,das-comp} from cohomological points of view. In the geometric context, compatible Poisson structures were appeared in the mathematical study of biHamiltonian mechanics \cite{kos,mag-mor}. Such geometric structures are closely related to compatible Lie algebras via dualization \cite{bol}. See \cite{dotsenko,stro} for the operadic study of compatible structures.

\medskip

The notion of $L_\infty$-algebras (also called strongly homotopy Lie algebras) are the homotopy analogue of Lie algebras \cite{jim,lada-markl}. More precisely, an $L_\infty$-algebra is a pair $(L, \{ l_k \}_{k \geq 1})$ consisting of a graded vector space $L$ together with a collection of skew-symmetric multilinear maps $l_k : L^{\otimes k} \rightarrow L$, for $k \geq 1$, of certain degrees that satisfy some higher Jacobi identities. $L_\infty$-algebras appeared in the study of deformation theory, multisymplectic geometry, Courant algebroids, Gauge theory and many classical topics in mathematics and mathematical physics \cite{l-def,l-mul,l-cou,l-gau,l-fie}. In \cite{baez-crans}, the authors introduced Lie $2$-algebras as the categorification of Lie algebras and showed that the category of Lie $2$-algebras is equivalent to the category of $L_\infty$-algebras with underlying graded vector space concentrated in two degrees.

\medskip

In this paper, we define a suitable compatibility condition between two $L_\infty$-algebra structures on a graded vector space. A compatible $L_\infty$-algebra is a triple $(L, \{l_k \}_{k \geq 1}, \{ l_k' \}_{k \geq 1})$ in which $(L, \{ l_k \}_{k \geq 1})$ and $(L, \{ l_k' \}_{k \geq 1})$ are $L_\infty$-algebras satisfying the compatibility condition. Compatible $L_\infty$-algebras can be seen as the homotopy analogue of compatible (graded) Lie algebras. Given a graded vector space $L$, we construct a graded Lie algebra whose Maurer-Cartan elements are in one-to-one correspondence with compatible $L_\infty$-algebra structures on $L$ (cf. Theorem \ref{comp-inf-gla}). We construct a compatible differential graded Lie algebra from a compatible Lie algebra via the Nijenhuis-Richardson bracket. We provide examples of compatible $L_\infty$-algebras arising from compatible $A_\infty$-algebras, Nijenhuis operators on $L_\infty$-algebras, compatible $V$-datas and compatible Courant algebroids. An application of compatible $V$-datas is given to relative Rota-Baxter operators on compatible Lie algebras.

\medskip

Next, we define the cohomology of a compatible $L_\infty$-algebra $(L, \{ l_k \}_{k \geq 1}, \{ l_k'\}_{k \geq 1})$ using the corresponding Maurer-Cartan element in the graded Lie algebra given in Theorem \ref{comp-inf-gla}. As an application of cohomology, we extensively study deformations of compatible $L_\infty$-algebras. We show that the vanishing of the second cohomology group of compatible $L_\infty$-algebra implies the rigidity of the structure (cf. Theorem \ref{2-rigid}). Moreover, given a finite order deformation, we construct a third cohomology class, called the obstruction class, which vanishes if and only if the given deformation extends to deformation of next order (cf. Theorem \ref{3-ext}).

\medskip

$L_\infty$-algebras whose underlying graded vector space is concentrated in two degrees (called $2$-term $L_\infty$-algebras) pay particular attention as they are related to the categorification of Lie algebras \cite{baez-crans}. In this paper, we focus on $2$-term compatible $L_\infty$-algebras. Some particular classes of such algebras are `strict algebras' and `skeletal algebras'. We introduce crossed module of compatible Lie algebras and show that there is a one-to-one correspondence between crossed modules of compatible Lie algebras and strict algebras (cf. Theorem \ref{strict-thm}). We also show that skeletal algebras can be classified by suitable $3$-cocycles of compatible Lie algebras (cf. Theorem \ref{skeletal-thm}). Finally, we introduce compatible Lie $2$-algebras as the categorification of compatible Lie algebras. We show that the category of compatible Lie $2$-algebras is equivalent to the category of compatible $L_\infty$-algebras whose underlying graded vector space is concentrated in two degrees and the differentials induced by two $L_\infty$-algebras are the same (cf. Theorem \ref{2-2thm}).

\medskip

The paper is organized as follows. In Section \ref{sec2}, we recall some basic results on compatible Lie algebras and $L_\infty$-algebras. In Section \ref{sec3}, we introduce compatible $L_\infty$-algebras as the homotopy analogue of compatible Lie algebras. Given a graded vector space $L$, we construct the graded Lie algebra whose Maurer-Cartan elements correspond to compatible $L_\infty$-algebra structures on $L$. Examples of compatible $L_\infty$-algebras from Nijenhuis operators, compatible $V$-datas and compatible Courant algebroids are given in Section \ref{sec4}. In Section \ref{sec5}, we introduce the cohomology of a compatible $L_\infty$-algebra and study deformations. Strict and skeletal compatible $L_\infty$-algebras and their classifications are considered in Section \ref{sec6}. Finally, in Section \ref{sec7}, we introduce compatible Lie $2$-algebras and prove Theorem \ref{2-2thm}.

\medskip

\noindent {\bf Notations.}

All (graded) vector spaces, (multi)linear maps, tensor products, wedge products are over a field {\bf k} of characteristic $0$. We denote the set of all permutations on $k$ elements by $S_k$. A permutation $\sigma \in S_k$ is called an $(i, k-i)$-shuffle if $\sigma (1) < \cdots < \sigma (i)$ and $\sigma (i+1) < \cdots < \sigma (k)$. Let $Sh (i, k-i)$ denotes the set of all $(i, k-i)$-shuffles. For any permutation $\sigma \in S_k$ and homogeneous elements $v_1, \ldots, v_k $ in a graded vector space $V$, the Koszul sign $\epsilon (\sigma) = \epsilon (\sigma ; v_1, \ldots, v_k)$ is given by $v_{\sigma (1)} \odot \cdots \odot v_{\sigma (k)} = \epsilon (\sigma) v_1 \odot \cdots \odot v_k$, where $\odot$ denotes the product on the symmetric algebra $SV$. A graded multilinear map $f: V^{\otimes k} \rightarrow V$ is called symmetric if $f (v_{\sigma (1)} , \ldots , v_{\sigma (k)}) = \epsilon (\sigma) f (v_1, \ldots, v_k)$ and called skew-symmetric if $f (v_{\sigma (1)} , \ldots , v_{\sigma (k)}) = (-1)^\sigma \epsilon (\sigma) f (v_1, \ldots, v_k)$, for all $\sigma \in S_k$ and homogeneous elements $v_1, \ldots, v_k \in V$.

\section{Preliminaries}\label{sec2}
In this section, we recall some basic results on compatible Lie algebras and $L_\infty$-algebras. Our main references are \cite{comp-lie,lada-markl,jim}.

%\medskip

\subsection*{Compatible Lie algebras}

\begin{defn}
A compatible Lie algebra is a triple $(\mathfrak{g}, [~,~], [~,~]')$ consisting of a vector space $\mathfrak{g}$ equipped with two Lie brackets $[~,~]$ and $[~,~]'$ on $\mathfrak{g}$ satisfying the compatibility
\begin{align}\label{comp-lie-def-id}
[[x,y]',z] + [[x,y],z]' = [[x,z]', y]+ [[x,z], y]' + [x, [y,z]'] + [x, [y,z]]', \text{ for } x, y, z \in \mathfrak{g}.
\end{align}
\end{defn}

Note that the compatibility condition (\ref{comp-lie-def-id}) is equivalent to say that $[~,~] + [~,~]'$ is also a Lie bracket on $\mathfrak{g}$.

\begin{defn}
Let $(\mathfrak{g}, [~,~], [~,~]')$ be a compatible Lie algebra. A representation of it consists of a triple $(V, \rho, \rho')$ in which $(V, \rho)$ is a representation of the Lie algebra $(\mathfrak{g}, [~,~])$, and $(V, \rho')$ is a representation of the Lie algebra $(\mathfrak{g}, [~,~]')$ satisfying additionally the following compatibility
\begin{align*}
\rho ([x, y]') + \rho' ([x,y]) = [\rho (x), \rho' (y)] + [\rho'(x), \rho (y)], ~ \text{ for } x, y \in \mathfrak{g}.
\end{align*}
\end{defn}

Any compatible Lie algebra $(\mathfrak{g}, [~,~], [~,~]')$ can be regarded as a representation $(\mathfrak{g}, \rho = \mathrm{ad}_{[~,~]}, \rho' = \mathrm{ad}_{[~,~]'})$ of itself. This is called the adjoint representation.

Let $(\mathfrak{g}, [~,~], [~,~]')$ be a compatible Lie algebra and $(V, \rho, \rho')$ be a representation of it. Consider the Chevalley-Eilenberg cochain complex $\{ \mathrm{Hom} (\wedge^\bullet \mathfrak{g}, V) , \partial \}$ of the Lie algebra $(\mathfrak{g}, [~,~])$ with coefficients in the representation $(V, \rho)$, where the  coboundary $\partial : \mathrm{Hom}(\wedge^n \mathfrak{g}, V) \rightarrow \mathrm{Hom}(\wedge^{n+1} \mathfrak{g}, V)$, for $n \geq 0$, is given by
\begin{align*}
(\partial f) (x_1, \ldots, x_{n+1}) =~& \sum_{i=1}^{n+1} (-1)^{i+1} \rho (x_i ) f (x_1, \ldots, \widehat{x_i}, \ldots, x_{n+1}) \\
~&+ \sum_{i < j } (-1)^{i+j} f ([x_i, x_j], x_1, \ldots, \widehat{x_i}, \ldots, \widehat{x_j}, \ldots, x_{n+1}),
\end{align*}
for $ f \in \mathrm{Hom} (\wedge^n \mathfrak{g}, V)$ and $x_1, \ldots, x_{n+1} \in \mathfrak{g}$. Similarly, we can consider the Chevalley-Eilenberg cochain complex $\{ \mathrm{Hom} (\wedge^\bullet \mathfrak{g}, V) , \partial' \}$ of the Lie algebra $(\mathfrak{g}, [~,~]')$ with coefficients in the representation $(V, \rho')$. It has been observed in \cite{comp-lie} that $\partial$ and $\partial'$ satisfies the compatibility $\partial \circ \partial' + \partial' \circ \partial = 0$. Therefore, we are now able to construct a new cochain complex $\{ C^\bullet_c (\mathfrak{g}, V), \partial_c \}$, where
\begin{align*}
C^0_c ( \mathfrak{g}, V ) = \{ v \in V |~ \rho (x) v = \rho' (x) v, \forall  x \in \mathfrak{g} \} ~~~ \text{ and } ~~~ C^{n \geq 1}_c (\mathfrak{g}, V) = \underbrace{ \mathrm{Hom}(\wedge^n \mathfrak{g}, V) \oplus \cdots \oplus \mathrm{Hom}(\wedge^n \mathfrak{g}, V)   }_{n \text{ times} }.
\end{align*}
The coboundary map $\partial_c : C^n_c (\mathfrak{g}, V) \rightarrow C^{n+1}_c (\mathfrak{g}, V)$, for $n \geq 0$, given by
\begin{align*}
\partial_c (f_1, \ldots, f_n ) = (\partial f_1, \ldots, \underbrace{\partial f_i + \partial' f_{i-1}}_{i\text{-th place}}, \ldots, \partial' f_n).
\end{align*}
The cohomology groups of the complex $\{ C^\bullet_c (\mathfrak{g}, V), \partial_c \}$ are called the cohomology of the compatible Lie algebra $(\mathfrak{g}, [~,~], [~,~]')$ with coefficients in the representation $(V, \rho, \rho')$.

\begin{remark}\label{rem-comp-co}
Let $n \geq 2$. Suppose $f, f' \in \mathrm{Hom}(\wedge^n \mathfrak{g}, V)$ are two elements that satisfies $\partial f = \partial' f' = \partial f' + \partial' f = 0$. Then the element $( f, \underbrace{f + f', \ldots, f+ f'}_{n-2 \text{ times}}, f') \in C^n_c (\mathfrak{g}, V)$ is a $n$-cocycle as
\begin{align*}
\partial_c ( f, f + f', \ldots, f+ f', f') = \big( \partial f, \partial f + \partial f' + \partial' f, \ldots, \underbrace{ \partial (f+f') + \partial' (f+f') }_{i\text{-th place}}, \ldots, \partial' f'  \big) = 0.
\end{align*}
Such $n$-cocycles are called induced by compatible Lie algebra cocycles. We will use such $3$-cocycles in Section \ref{sec6} to classify skeletal compatible $L_\infty$-algebras.
\end{remark}

%\medskip

\subsection*{$L_\infty$-algebras}

\begin{defn}
An $L_\infty$-algebra is a pair $(L, \{ l_k \}_{k \geq 1})$ consisting of a graded vector space $L= \oplus_{i \in \mathbb{Z}} L_i$ together with a collection $\{ l_k \}_{k \geq 1}$ of graded skew-symmetric multilinear maps $l_k : L^{\otimes k} \rightarrow L$ with $\text{deg } (l_k) = 2-k$, for $k \geq 1$, satisfying the following set of identities:
\begin{align}\label{hji}
\sum_{i+j = n+1} \sum_{\sigma \in Sh (i,n-i)} \chi (\sigma) (-1)^{i (j-1)} ~ l_j \big(  l_i (x_{\sigma (1)}, \ldots, x_{\sigma (i)}), x_{\sigma (i+1)}, \ldots, x_{\sigma (n)} \big) = 0,
\end{align}
for $n \geq 1$ and homogeneous elements $x_1, \ldots, x_n \in L.$
\end{defn}

The identities (\ref{hji}) are called the higher Jacobi identities. For small values on $n$, the meaning of the higher Jacobi identities can be found in \cite{lada-markl}.

\begin{remark}\label{rem-dgla}
Any differential graded Lie algebra $(L, \partial, [~,~])$ is an $L_\infty$-algebra in which $l_1 = \partial$, $l_2 = [~,~]$ and $l_k = 0$, for $k \geq 3$.
\end{remark}

Let $(L, \{ l_k \}_{k \geq 1})$ be an $L_\infty$-algebra. Take $V = s^{-1}L$ the desuspension of $L$, where $V_i = (s^{-1}L)_i = L_{i + 1}$, for $i \in \mathbb{Z}$. We define degree $1$ maps $\rho_k : V^{\otimes k} \rightarrow V$, for $k \geq 1$, by
\begin{align*}
\rho_k := (-1)^{\frac{k(k-1)}{2}}~ s^{-1} \circ l_k \circ s^{\otimes k }.
\end{align*}
It turns out that the map $\rho_k$ is graded symmetric, for $k \geq 1$. Note that the map $l_k$ can be reconstructed from $\rho_k$ by $l_k = s \circ \rho_k \circ (s^{-1})^{\otimes k}$.

%Let $SV$ denote the free symmetric coalgebra on $V$ with the coproduct given by
%\begin{align*}
%\Delta (v_1 \odot \cdots \odot v_n) = \sum \sum_{\sigma \in Sh (k, n-sk)} \epsilon (\sigma) ~ (v_{\sigma (1)} \odot \cdots \odot v_{\sigma (i)} \otimes 
%\end{align*}
For each $k \geq 1$, the map $\rho_k$ uniquely lifts to a degree $1$ coderivation $\widetilde{\rho_k} : SV \rightarrow SV$ on the symmetric coalgebra $SV$, given by
\begin{align*}
\widetilde{\rho_k} (v_1 \odot \cdots \odot v_n) = \sum_{\sigma \in Sh (k, n-k)} \epsilon (\sigma) ~ \rho_k ( v_{\sigma(1)} \odot \cdots \odot v_{\sigma(k)}) \odot v_{\sigma (k+1)} \odot \cdots \odot v_{\sigma (n)}, ~ \text{ for } n \geq k
\end{align*}
and zero otherwise. Note that the graded space of coderivations $\mathrm{Coder}^\bullet (SV)$ on the symmetric coalgebra $SV$ is a graded Lie algebra with the commutator bracket $[~,~]_C$.

\begin{prop}
An $L_\infty$-algebra structure on a graded vector space $L$ is equivalent to a Maurer-Cartan element in the graded Lie algebra $(\mathrm{Coder}^\bullet (S V) , [~,~]_C)$, where $V = s^{-1}L$.
\end{prop}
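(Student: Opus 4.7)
The plan is to exploit the cofreeness of the symmetric coalgebra $SV$ and reduce the statement to a sign-tracking exercise. First I would observe that since $SV$ is the cofree cocommutative coalgebra (without counit) on $V$, a degree $1$ coderivation $D \in \mathrm{Coder}^1(SV)$ is uniquely determined by its composition $\pi \circ D : SV \rightarrow V$ with the canonical projection $\pi : SV \rightarrow V$. Decomposing by arity, this yields a bijection between degree $1$ coderivations $D$ on $SV$ and families of degree $1$ graded symmetric maps $\rho_k : V^{\otimes k} \rightarrow V$ ($k \geq 1$), where $D = \sum_{k \geq 1} \widetilde{\rho_k}$ is the unique lift described just before the proposition.

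Next, I would translate between the $\rho_k$ and $l_k$ using the desuspension. The formula $\rho_k = (-1)^{k(k-1)/2}~ s^{-1} \circ l_k \circ s^{\otimes k}$ converts graded skew-symmetric maps $l_k$ of degree $2-k$ on $L$ into graded symmetric degree $1$ maps $\rho_k$ on $V$; the Koszul sign $(-1)^{k(k-1)/2}$ is exactly what is needed so that symmetry under permutation of inputs of $\rho_k$ corresponds to skew-symmetry of $l_k$. Thus the data $\{ l_k \}_{k \geq 1}$ on $L$ is the same as the data of one degree $1$ coderivation $D$ on $SV$.

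The heart of the proof is to verify that the Maurer-Cartan equation
\begin{align*}
[D,D]_C = 2 D \circ D = 0
\end{align*}
is equivalent to the higher Jacobi identities (\ref{hji}). Since $D \circ D$ is again a coderivation (the commutator of coderivations in a graded Lie algebra of odd degree elements), by cofreeness it vanishes if and only if $\pi \circ D \circ D : SV \rightarrow V$ vanishes. Expanding $\pi \circ \widetilde{\rho_j} \circ \widetilde{\rho_i}$ on an element $v_1 \odot \cdots \odot v_n$ with $i+j = n+1$, only the unshuffle summands in $\widetilde{\rho_i}$ whose image lies entirely in the first tensor slot of the second application of $\widetilde{\rho_j}$ survive under $\pi$. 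Collecting these contributions yields
\begin{align*}
\pi \circ D \circ D (v_1 \odot \cdots \odot v_n) = \sum_{i+j = n+1} \sum_{\sigma \in Sh(i, n-i)} \epsilon(\sigma)~ \rho_j \big( \rho_i (v_{\sigma(1)}, \ldots, v_{\sigma(i)}), v_{\sigma(i+1)}, \ldots, v_{\sigma(n)} \big),
\end{align*}
and transporting this identity through $s$ and $s^{\otimes n}$ produces precisely the higher Jacobi identity (\ref{hji}), with the sign $(-1)^{i(j-1)}$ emerging from the product of the suspension signs $(-1)^{i(i-1)/2}$, $(-1)^{j(j-1)}/2$ and $(-1)^{(i+1)i/2}$ together with Koszul signs from passing $s^{-1}$ past the first $i-1$ entries.

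The main obstacle, and the only real content, is the sign bookkeeping in the last step: matching the Koszul sign $\epsilon(\sigma)$ on $V = s^{-1}L$ with the sign $\chi(\sigma) = (-1)^\sigma \epsilon(\sigma)$ on $L$, and showing that the accumulated suspension signs assemble exactly into $(-1)^{i(j-1)}$. Once this calculation is performed for arbitrary $i, j$ with $i+j = n+1$, the equivalence between $[D,D]_C = 0$ and the system (\ref{hji}) is established, completing the proof in both directions (an $L_\infty$-structure gives a Maurer-Cartan element, and conversely any degree $1$ coderivation $D$ with $D^2 = 0$ yields, via $l_k = s \circ \rho_k \circ (s^{-1})^{\otimes k}$, an $L_\infty$-structure on $L$).
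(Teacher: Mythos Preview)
The paper does not actually prove this proposition: it is stated in the Preliminaries section as a known background result, with the references \cite{lada-markl,jim} given at the start of that section. Your outline is the standard argument for this classical fact (cofreeness of $SV$ to identify degree~$1$ coderivations with families $\{\rho_k\}$, the suspension dictionary between $\rho_k$ and $l_k$, and the reduction of $[D,D]_C=0$ to the higher Jacobi identities via $\pi\circ D\circ D$), and it is correct; the only technical work, as you say, is the sign bookkeeping, which is carried out in detail in the cited sources.
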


\section{Compatible $L_\infty$-algebras}\label{sec3}
In this section, we introduce compatible $L_\infty$-algebras as the homotopy analogue of compatible Lie algebras. Given a graded vector space $L$, we also construct a graded Lie algebra whose Maurer-Cartan elements are given by compatible $L_\infty$-algebra structures $L$.

\begin{defn}
A compatible $L_\infty$-algebra is a triple $(L, \{ l_k \}_{k \geq 1}, \{ l_k ' \}_{k \geq 1})$ in which  $(L, \{ l_k \}_{k \geq 1})$ and  $(L, \{ l_k' \}_{k \geq 1})$ are both $L_\infty$-algebras additionally satisfying
\begin{align}\label{hji-comp}
\sum_{i+j = n+1} \sum_{\sigma \in Sh (i, n-i)} \chi (\sigma) (-1)^{i (j-1)}~ \bigg\{& l_j \big(  l_i' (x_{\sigma (1)}, \ldots, x_{\sigma (i)}), x_{\sigma (i+1)}, \ldots, x_{\sigma (n)} \big) \\
&+ l_j' \big(  l_i (x_{\sigma (1)}, \ldots, x_{\sigma (i)}), x_{\sigma (i+1)}, \ldots, x_{\sigma (n)} \big) \bigg\} = 0, \nonumber
\end{align}
for $ n \geq 1$ and $x_1, \ldots, x_n \in L.$
\end{defn}

In this case, we often say that the $L_\infty$-algebras  $(L, \{ l_k \}_{k \geq 1})$ and $(L, \{ l_k' \}_{k \geq 1})$ are compatible.
Note that the compatibility condition (\ref{hji-comp}) is equivalent to the fact that the pair $(L, \{ l_k + l_k' \}_{k \geq 1})$ is an $L_\infty$-algebra.

\begin{exam}
(i) Any compatible Lie algebra is a compatible $L_\infty$-algebra whose underlying graded vector space is concentrated in degree $0$. Any compatible differential graded Lie algebra $(L, \{ \partial, [~,~] \}, \{ \partial', [~,~]' \})$ is also a compatible $L_\infty$-algebra with $l_k = l'_k = 0$, for $k \geq 3$. See also Proposition \ref{comp-dgla-prop}.\\

(ii) Let $(L, \{ l_k \}_{k \geq 1})$ be an $L_\infty$-algebra. Then $(L, \{l_k \}_{k \geq 1}, \{ l_k \}_{k \geq 1})$ is obviously a compatible $L_\infty$-algebra.\\

(iii) Let $(L, \{ l_k \}_{k \geq 1}, \{ l'_k \}_{k \geq 1})$ be a compatible $L_\infty$-algebra. Then $(L, \{ l'_k \}_{k \geq 1}, \{ l_k \}_{k \geq 1})$, $(L, \{ l_k \}_{k \geq 1}, \{ l_k + l'_k \}_{k \geq 1})$ and $(L, \{ l_k + l'_k \}_{k \geq 1}, \{ l'_k \}_{k \geq 1})$ are all compatible $L_\infty$-algebras.
\end{exam}

Some nontrivial examples of compatible $L_\infty$-algebras are provided in Sections \ref{sec4} and \ref{sec6}. In the next, we give some characterization results for compatible $L_\infty$-algebras.

\medskip

%In the following, we give a characterization of compatible $L_\infty$-algebras in terms of compatible Maurer-Cartan elements in a graded Lie algebra.

%\begin{defn}
Let $(\mathfrak{g} = \oplus_{n \in \mathbb{Z}} \mathfrak{g}^n, [~,~]) $ be a graded Lie algebra and $\theta, \theta' \in \mathfrak{g}^1$ be two Maurer-Cartan elements. They are said to be compatible if they satisfies $[\theta, \theta'] = 0$. The following result gives a characterization of compatible $L_\infty$-algebra structures on a graded vector space $L$ in terms of compatible Maurer-Cartan elements in the graded Lie algebra $(\mathrm{Coder}^\bullet (SV), [~,~]_C)$, where $V = s^{-1}L$.
%\end{defn}

\begin{thm}\label{comp-inf-comp-mc}
There is a one-to-one correspondence between compatible $L_\infty$-algebra structures on a graded vector space $L$ and compatible Maurer-Cartan elements in the graded Lie algebra $(\mathrm{Coder}^\bullet (S V) , [~,~]_C)$, where $V = s^{-1}L$.
\end{thm}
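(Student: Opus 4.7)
The plan is to reduce everything to the Proposition stated just before the theorem, which says that $L_\infty$-algebra structures on $L$ are in bijection with Maurer–Cartan elements in $(\mathrm{Coder}^\bullet(SV), [~,~]_C)$. Under this bijection, a family $\{l_k\}_{k \geq 1}$ corresponds to the degree-$1$ coderivation $\theta = \sum_{k \geq 1} \widetilde{\rho_k}$, where $\rho_k = (-1)^{k(k-1)/2}\, s^{-1} \circ l_k \circ s^{\otimes k}$, and the higher Jacobi identities (\ref{hji}) for $\{l_k\}$ are equivalent to $[\theta,\theta]_C = 0$. My first step is to observe that the assignment $\{l_k\} \mapsto \theta$ is linear in the brackets, so if $\{l_k'\}_{k \geq 1}$ corresponds to $\theta'$, then $\{l_k + l_k'\}_{k \geq 1}$ corresponds to $\theta + \theta'$.

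Next, I would invoke the remark made immediately after equation (\ref{hji-comp}): the compatibility condition between $\{l_k\}$ and $\{l_k'\}$ is equivalent to $(L, \{l_k + l_k'\}_{k \geq 1})$ being an $L_\infty$-algebra. By the cited Proposition, this is in turn equivalent to $\theta + \theta'$ being a Maurer–Cartan element of $(\mathrm{Coder}^\bullet(SV), [~,~]_C)$.

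The heart of the argument is then a straightforward polarization. Since $[~,~]_C$ is graded antisymmetric and both $\theta, \theta'$ have degree $1$, we have $[\theta,\theta']_C = [\theta',\theta]_C$, and hence
\begin{align*}
[\theta + \theta', \theta + \theta']_C = [\theta,\theta]_C + 2\,[\theta,\theta']_C + [\theta',\theta']_C.
\end{align*}
Using the individual Maurer–Cartan equations $[\theta,\theta]_C = 0 = [\theta',\theta']_C$, the right-hand side collapses to $2\,[\theta,\theta']_C$. Thus $\theta + \theta'$ is Maurer–Cartan if and only if $[\theta,\theta']_C = 0$, i.e., $\theta$ and $\theta'$ are compatible Maurer–Cartan elements in the sense defined just before the theorem. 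Combining the two equivalences yields the desired bijection.

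The only real obstacle I anticipate is the bookkeeping needed to check that the explicit multilinear identity (\ref{hji-comp}) matches the coderivation commutator identity $[\theta,\theta']_C = 0$ term-by-term, which amounts to expanding $[\widetilde{\rho_i}, \widetilde{\rho_j'}]_C$ on a symmetric product $v_1 \odot \cdots \odot v_n$ as a shuffle sum and tracking the desuspension signs $(-1)^{i(j-1)}$ arising from $s^{\otimes k}$. This computation is, however, structurally identical to the one proving the cited Proposition for a single $L_\infty$-structure; once that is accepted, the cross-term identification is formal.
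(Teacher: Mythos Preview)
Your proposal is correct and follows essentially the same approach as the paper: both arguments set up the coderivations $D,D'$ (your $\theta,\theta'$) via the desuspension formula and reduce to the Proposition characterizing $L_\infty$-structures as Maurer--Cartan elements. The only cosmetic difference is that the paper directly asserts the equivalence $[D,D']_C=0 \Leftrightarrow (\ref{hji-comp})$ as a parallel to the single-structure case, whereas you obtain it by polarization via the sum $\theta+\theta'$; both routes amount to the same observation.
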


\begin{proof}
Let $\{ l_k \}_{k \geq 1}$ and $\{ l'_k \}_{k \geq 1}$ be two collections of multilinear maps $l_k, l'_k : L^{\otimes k} \rightarrow L$ with $\mathrm{deg}(l_k) = \mathrm{deg}(l'_k) = 2-k$, for $k \geq 1$. We consider the collections $\{ \rho_k \}_{k \geq 1}$ and $\{ \rho'_k \}_{k \geq 1}$ of degree $1$ maps $\rho_k, \rho'_k : V^{\otimes k} \rightarrow V$ given by
\begin{align*}
\rho_k = (-1)^{\frac{k(k-1)}{2}} ~ s^{-1} \circ l_k \circ s^{\otimes k}   ~~~~ \text{ and } ~~~~ \rho'_k = (-1)^{\frac{k(k-1)}{2}} ~ s^{-1} \circ l'_k \circ s^{\otimes k}, ~\text{ for } k \geq 1.
\end{align*}
Define two degree $1$ coderivations $D, D' \in \mathrm{Coder}^1 (SV)$ by $D = \sum_{k \geq 1} \widetilde{\rho_k}$ and $D' = \sum_{k \geq 1} \widetilde{\rho'_k}$. Then we have
\begin{align*}
& [D,D]_C = 0 ~~~ \leftrightsquigarrow ~~~ (L, \{ l_k \}_{k \geq 1}) \text{ is an } L_\infty \text{-algebra},\\
& [D',D']_C = 0 ~~~ \leftrightsquigarrow ~~~ (L, \{ l'_k \}_{k \geq 1}) \text{ is an } L_\infty \text{-algebra},\\
& [D,D']_C = 0 ~~~ \leftrightsquigarrow ~~~ \text{the compatibility condition } (\ref{hji-comp}).
\end{align*}
This completes the proof.
\end{proof}

Our aim now is to construct a new graded Lie algebra whose Maurer-Cartan elements correspond to compatible $L_\infty$-algebra structures on $L$. We define a graded vector space $\mathrm{Coder}^\bullet_c (SV) = \oplus_{n \geq 0}\mathrm{Coder}^n_c (SV)$, where
\begin{align*}
\mathrm{Coder}^n_c (SV) := \underbrace{\mathrm{Coder}^n (SV) \oplus \cdots \oplus \mathrm{Coder}^n (SV)}_{n +1 \text{ times}}, \text{ for } n \geq 0.
\end{align*}
The graded Lie bracket $[~,~]_C$ on $\mathrm{Coder}^\bullet (SV)$ induces a bracket on $\mathrm{Coder}^\bullet_c (SV)$ given by
\begin{align*}
\llbracket (D_1, \ldots, D_{m+1}), &(D'_1, \ldots, D'_{n+1}) \rrbracket \\
& := \big(  [D_1, D_1']_C, \ldots, \underbrace{[D_1, D_i']_C + [D_2, D'_{i-1}]_C + \cdots + [D_i, D'_1]_C}_{i\text{-th place}}, \ldots, [D_{m+1}, D'_{n+1}]_C  \big),
\end{align*}
for $(D_1, \ldots, D_{m+1}) \in \mathrm{Coder}^m_c (SV)$ and $(D'_1, \ldots, D'_{n+1}) \in \mathrm{Coder}^n_c (SV)$. It is easy to verify that $\llbracket ~, ~ \rrbracket$ is a graded Lie bracket which makes $(\mathrm{Coder}^\bullet_c (SV), \llbracket ~, ~ \rrbracket )$ into a graded Lie algebra.

\begin{thm}\label{comp-inf-gla}
There is a one-to-one correspondence between compatible $L_\infty$-algebra structures on $L$ and Maurer-Cartan elements in the graded Lie algebra $(\mathrm{Coder}^\bullet_c (SV), \llbracket ~, ~ \rrbracket )$, where $V = s^{-1}L$.
\end{thm}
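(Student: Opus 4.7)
The plan is to unpack the Maurer--Cartan equation in $(\mathrm{Coder}^\bullet_c(SV), \llbracket -,-\rrbracket)$ and match it term-by-term against the three conditions characterizing compatible $L_\infty$-algebras that were already established in Theorem \ref{comp-inf-comp-mc}. Before doing so, I would verify (as asserted in the excerpt) that $\llbracket -,-\rrbracket$ is indeed a graded Lie bracket: graded skew-symmetry and the graded Jacobi identity for $\llbracket -,-\rrbracket$ reduce componentwise to the corresponding identities for $[-,-]_C$ on $\mathrm{Coder}^\bullet(SV)$, the only bookkeeping issue being to track the indexing by $n+1$ copies in degree $n$ and confirm that the shifted-sum structure of the bracket respects composition in each slot.

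Next, I would identify the degree one part. By definition, $\mathrm{Coder}^1_c(SV) = \mathrm{Coder}^1(SV) \oplus \mathrm{Coder}^1(SV)$, so a candidate Maurer--Cartan element is a pair $(D, D')$ of degree one coderivations of $SV$. Similarly, $\mathrm{Coder}^2_c(SV)$ has three copies of $\mathrm{Coder}^2(SV)$, which is exactly the space in which $\llbracket (D, D'), (D, D')\rrbracket$ lands. A direct application of the defining formula for $\llbracket -,-\rrbracket$ gives
\begin{align*}
\llbracket (D, D'), (D, D') \rrbracket = \bigl( [D, D]_C,\ [D, D']_C + [D', D]_C,\ [D', D']_C \bigr).
\end{align*}
Since $D$ and $D'$ have odd degree, $[D, D']_C = [D', D]_C$, so the middle component is $2[D, D']_C$. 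Hence the Maurer--Cartan equation $\tfrac{1}{2}\llbracket (D, D'), (D, D') \rrbracket = 0$ is equivalent to the three simultaneous conditions
\begin{align*}
[D, D]_C = 0, \qquad [D, D']_C = 0, \qquad [D', D']_C = 0.
\end{align*}

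Finally, I would invoke Theorem \ref{comp-inf-comp-mc}: the first and third identities are equivalent to $D$ and $D'$ being $L_\infty$-structures on $L$ (via the standard correspondence $D = \sum_k \widetilde{\rho_k}$ with $\rho_k = (-1)^{k(k-1)/2}\, s^{-1}\circ l_k \circ s^{\otimes k}$, and likewise for $D'$), while the middle identity $[D, D']_C = 0$ is equivalent to the compatibility condition (\ref{hji-comp}). This sets up a bijection $(\{l_k\}, \{l_k'\}) \leftrightarrow (D, D')$ between compatible $L_\infty$-structures on $L$ and Maurer--Cartan elements of $(\mathrm{Coder}^\bullet_c(SV), \llbracket -,-\rrbracket)$.

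The only real obstacle is the verification that $\llbracket -,-\rrbracket$ satisfies graded skew-symmetry and the graded Jacobi identity; once one is careful with the shifted-sum convention defining the $i$-th component, this reduces slot-by-slot to the known properties of $[-,-]_C$. Everything else follows by a purely formal expansion combined with Theorem \ref{comp-inf-comp-mc}.
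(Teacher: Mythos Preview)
Your proposal is correct and follows essentially the same route as the paper: expand $\llbracket (D,D'),(D,D')\rrbracket$ componentwise to get $([D,D]_C,\,[D,D']_C+[D',D]_C,\,[D',D']_C)$, then invoke Theorem~\ref{comp-inf-comp-mc}. The only addition you make is the explicit remark that the graded Lie axioms for $\llbracket-,-\rrbracket$ need checking, which the paper simply asserts.
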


\begin{proof}
Let $D, D' \in \mathrm{Coder}^1 (SV)$ be two degree $1$ coderivations on $SV$. Consider the pair $(D,D') \in \mathrm{Coder}^1_c (SV)$. Then we have
\begin{align*}
\llbracket (D,D'), (D, D') \rrbracket = ( [D,D]_C, ~ [D,D']_C + [D', D]_C,~ [D', D']_C ).
\end{align*}
This shows that $D,D'$ are two compatible Maurer-Cartan elements in the graded Lie algebra $(\mathrm{Coder}^\bullet (SV), [~,~]_C)$ if and only if the pair $(D,D')$ is a Maurer-Cartan element in the graded Lie algebra $(\mathrm{Coder}^\bullet_c (SV), \llbracket ~, ~ \rrbracket)$. Hence by Theorem \ref{comp-inf-comp-mc}, we get
\begin{align*}
&\text{comp } L_\infty \text{ structure }  && \leftrightsquigarrow  &&& \text{comp Maurer-Cartan elements}  &&&& \leftrightsquigarrow   &&&&& \text{Maurer-Cartan element}\\
&(\{ l_k \}_{k \geq 1}, \{l'_k\}_{k \geq 1}) \text{ on } L  &&   &&& D, D' \in (\mathrm{Coder}^\bullet (SV), [~,~]_C) &&&&   &&&&& (D,D') \in (\mathrm{Coder}^\bullet_c (SV), \llbracket ~, ~ \rrbracket).
\end{align*}
This completes the proof. 
\end{proof}

\section{Examples of compatible $L_\infty$-algebras}\label{sec4}
In this section, we provide some examples of compatible $L_\infty$-algebras arising mainly from compatible $A_\infty$-algebras, Nijenhuis operators on $L_\infty$-algebras, compatible $V$-datas and compatible Courant algebroids. However, we start with an example of a compatible differential graded Lie algebra (dgLa) from a compatible Lie algebra.

\subsection*{Compatible dgLa induced from a compatible Lie algebra}

We have seen in Remark \ref{rem-dgla} that a differential graded Lie algebra can be considered as an $L_\infty$-algebra with trivial higher brackets. The following result is a consequence of the compatibility condition (\ref{hji-comp}).

\begin{prop}\label{comp-dgla-prop}
Let $(L, \partial. [~,~])$ and $(L, \partial', [~,~]')$ be two differential graded Lie algebras. Then the triple $(L, \{ \partial, [~,~]\}, \{ \partial', [~,~]' \})$ is a compatible differential graded Lie algebra if and only if
\begin{itemize}
\item $\partial \circ \partial' + \partial' \circ \partial = 0$,
\item $[[x, y], z]' + [[x,y]', z] = (-1)^{|y||z|} [[x, z], y]' + (-1)^{|y||z|} [[x,z]', y] + [x, [y,z]]' + [x, [y,z]'], $
\item $\partial [x, y]' + \partial' [x, y] = [\partial (x), y] ' + [\partial'(x), y] + (-1)^{|x|} [ x, \partial (y)]' + (-1)^{|x|} [x, \partial' (y)]$, ~ for $x, y, z \in \mathfrak{g}$. 
\end{itemize}
\end{prop}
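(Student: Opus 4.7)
The strategy is to unpack the compatibility condition (\ref{hji-comp}) for the special case where $l_1=\partial,\ l_2=[\,,\,],\ l_k=0$ for $k\geq 3$, and similarly $l_1'=\partial',\ l_2'=[\,,\,]',\ l_k'=0$ for $k\geq 3$. Since both $(L,\partial,[\,,\,])$ and $(L,\partial',[\,,\,]')$ are $L_\infty$-algebras (Remark \ref{rem-dgla}), the two higher Jacobi identities (\ref{hji}) hold automatically, so the triple is a compatible $L_\infty$-algebra (equivalently, a compatible dgLa) if and only if (\ref{hji-comp}) holds for every $n\geq 1$.

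The key observation is that for $n\geq 4$, every summand of (\ref{hji-comp}) vanishes. Indeed, $i+j=n+1\geq 5$ forces $i\geq 3$ or $j\geq 3$; in the first case $l_i=l_i'=0$, and in the second case $l_j=l_j'=0$, so both $l_j\bigl(l_i'(\cdots),\cdots\bigr)$ and $l_j'\bigl(l_i(\cdots),\cdots\bigr)$ vanish. Hence only $n=1,2,3$ need to be inspected.

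For $n=1$, the only term corresponds to $(i,j)=(1,1)$ and the trivial shuffle, yielding $l_1\circ l_1' + l_1'\circ l_1 = \partial\partial' + \partial'\partial = 0$, which is the first bullet. For $n=3$, among the splittings $(i,j)\in\{(1,3),(2,2),(3,1)\}$, the outer $(1,3)$ and $(3,1)$ pieces vanish because $l_3=l_3'=0$, so only $(i,j)=(2,2)$ survives, and expanding the sum over $(2,1)$-shuffles with Koszul signs produces precisely the second bullet (a graded Nijenhuis-type Jacobi identity). For $n=2$, the splittings $(i,j)\in\{(1,2),(2,1)\}$ both contribute: the $(2,1)$-part gives $\partial[x,y]'+\partial'[x,y]$, while the $(1,2)$-part with its two shuffles gives the derivation terms $[\partial(x),y]'+[\partial'(x),y]+(-1)^{|x|}[x,\partial(y)]'+(-1)^{|x|}[x,\partial'(y)]$, reproducing the third bullet.

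The only nontrivial step is tracking the Koszul signs and the overall $(-1)^{i(j-1)}$ factor in the $n=2$ and $n=3$ expansions and verifying that they agree with the signs shown in the proposition. All other steps are formal: restricting (\ref{hji-comp}) to the five relevant summands and reading off the identities. Conversely, if the three bulleted equations hold, then (\ref{hji-comp}) is satisfied for $n=1,2,3$, and (as noted) automatically for $n\geq 4$, completing the equivalence.
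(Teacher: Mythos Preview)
Your proposal is correct and follows precisely the approach implied by the paper, which does not give a detailed proof but merely states that the result is ``a consequence of the compatibility condition (\ref{hji-comp}).'' Your expansion of (\ref{hji-comp}) for $n=1,2,3$ and the observation that all terms vanish for $n\geq 4$ is exactly the intended argument.
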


%In this case, $(L, \{ \partial, [~,~]\}, \{ \partial', [~,~]' \})$ is a compatible differential graded Lie algebra. 
In the following, we construct an example of a compatible differential graded Lie algebra from a compatible Lie algebra.

\begin{exam}
Let $(\mathfrak{g}, [~,~], [~,~]')$ be a compatible Lie algebra. Consider the graded vector space $L = \oplus_{n \geq 0} \mathrm{Hom} (\wedge^{n+1} \mathfrak{g}, \mathfrak{g})$ of all skew-symmetric multilinear maps on the vector space $\mathfrak{g}$. It carries a graded Lie bracket (Nijenhuis-Richardson bracket) given by
\begin{align*}
[f, g]_\mathsf{NR} :=~&  f \diamond g - (-1)^{mn} g \diamond f, ~ \text{ where }\\
(f \diamond g) (x_1, \ldots, x_{m+n+1}) =~& \sum_{\sigma \in Sh (n+1, m)} (-1)^\sigma ~ f \big(  g( x_{\sigma (1)},\ldots, x_{\sigma (n+1)} ), x_{\sigma (n+2)}, \ldots, x_{\sigma (m+n+1)} \big),
\end{align*}
for $f \in \mathrm{Hom} (\wedge^{m+1} \mathfrak{g}, \mathfrak{g})$ and $g \in \mathrm{Hom} (\wedge^{n+1} \mathfrak{g}, \mathfrak{g})$. Let the compatible Lie brackets $[~,~]$ and $[~,~]'$ correspond to elements $\mu$ and $\mu'$ in $L_1 = \mathrm{Hom} (\wedge^2 \mathfrak{g}, \mathfrak{g})$. Then $\mu$ and $\mu'$ satisfies 
\begin{align*}
[\mu , \mu]_\mathsf{NR} = 0, ~\quad   ~~~ [\mu', \mu' ]_\mathsf{NR} = 0 ~~~~ \text{ and } ~~~~  [\mu, \mu']_\mathsf{NR} = 0.
\end{align*}
This implies that the differential graded Lie algebras $(L, \partial_\mu = [\mu, -]_\mathsf{NR}, [~,~]_\mathsf{NR}, )$ and $(L, \partial_{\mu'} = [\mu', -]_\mathsf{NR}, [~,~]_\mathsf{NR}, )$ satisfy the conditions of Proposition \ref{comp-dgla-prop}. In other words, $(L, \{ \partial_\mu, [~,~]_\mathsf{NR} \}, \{ \partial_{\mu'} , [~,~]_\mathsf{NR} \})$ is a compatible differential graded Lie algebra.
\end{exam}

\subsection*{Compatible $A_\infty$-algebras}

The notion of $A_\infty$-spaces and $A_\infty$-algebras (strongly homotopy associative algebras) was introduced by Stasheff in the study of group-like topological spaces \cite{stas}. They have important applications in homotopy theory and combinatorial study of associahedra.

\begin{defn}
An $A_\infty$-algebra is a pair $(A, \{ \mu_k \}_{k \geq 1})$ consisting of a graded vector space $A = \oplus_{i \in \mathbb{Z}} A_i$ together with a collection $\{ \mu_k \}_{k \geq 1} $ of graded multilinear maps $\mu_k : A^{\otimes k} \rightarrow A$ with $\mathrm{deg}(\mu_k) = 2-k$, for $k \geq 1$, satisfying for $n \geq 1$ and homogeneous elements $a_1, \ldots, a_n \in A$,
\begin{align*}
\sum_{i+j = n+1} \sum_{\lambda =1}^j (-1)^{\lambda (i+1) + i(|a_1|+ \cdots + |a_{i-1}|)}  \mu_j \big( a_1 , \ldots, a_{i-1}, \mu_i (a_i, \ldots, a_{i+k-1}), \ldots, a_n \big) = 0.
\end{align*}
\end{defn}

Let $W = s^{-1} A$. Note that a collection $\{ \mu_k \}_{k \geq 1}$ of graded multilinear maps $\mu_k : A^{\otimes k} \rightarrow A$ with $\mathrm{deg}(\mu_k) = 2-k$ induces a collection $\{ \tau_k : W^{\otimes k} \rightarrow W \}_{k \geq 1}$ of degree $1$ multilinear maps, where $\tau_k = (-1)^{\frac{k(k-1)}{2}} s^{-1} \circ \mu_k \circ s^{\otimes k}$, for $k \geq 1$. Hence we obtain a degree $1$ coderivation $E = \sum_{k=1}^\infty \widetilde{\tau_k} : T W \rightarrow TW$ on the tensor coalgebra $TW$, where $\widetilde{\tau_k} : TW \rightarrow TW$ is given by
\begin{align*}
\widetilde{\tau_k} (w_1 \otimes \cdots \otimes w_n) = \sum_{i=1}^{n-k+1} (-1)^{|w_1| + \cdots + |w_{i-1}|}~ w_1 \otimes \cdots \otimes \tau_k (w_i \otimes \cdots \otimes w_{i+k-1}) \otimes \cdots \otimes w_n,
\end{align*}
for $n \geq k$ and zero otherwise. Consider the graded Lie algebra $(\mathrm{Coder}^\bullet (TW), [~,~]_C)$ on the graded vector space $\mathrm{Coder}^\bullet (TW)$ of coderivations on the tensor coalgebra $TW$. Then $(A, \{ \mu_k \}_{k \geq 1})$ is an $A_\infty$-algebra if and only if $E \in \mathrm{Coder}^1 (TW)$ satisfies $[E,E]_C = 0$. The following result has been proved in \cite{lada-markl}.

\begin{prop}\label{skew-inf} Let $(A, \{ \mu_k \}_{k \geq 1})$ be an $A_\infty$-algebra. Then $(A, \{ l_k \}_{k \geq 1})$ is an $L_\infty$-algebra, where
\begin{align*}
l_k (a_1, \ldots, a_k ) = \sum_{\sigma \in S_k} (-1)^\sigma \mu_k (a_{\sigma (1)}, \ldots, a_{\sigma (k)}), ~ \text{ for } k \geq 1.
\end{align*}
The collection $\{ l_k \}_{k \geq 1}$ is called the skew-symmetrization of $\{ \mu_k \}_{k \geq 1}$.
\end{prop}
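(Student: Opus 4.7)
The plan is to pass to the coderivation picture, where both the $A_\infty$- and $L_\infty$-axioms become Maurer--Cartan equations on the same type of object. Set $W = s^{-1}A$ and define the degree $1$ maps $\tau_k = (-1)^{k(k-1)/2}\, s^{-1} \circ \mu_k \circ s^{\otimes k}$ and $\rho_k = (-1)^{k(k-1)/2}\, s^{-1} \circ l_k \circ s^{\otimes k}$, for $k \geq 1$. A straightforward sign count (using that the suspension signs depend only on $k$) identifies $\rho_k$ with the graded symmetrization of $\tau_k$, namely $\rho_k(w_1, \ldots, w_k) = \sum_{\sigma \in S_k} (-1)^\sigma\, \epsilon(\sigma)\, \tau_k(w_{\sigma(1)}, \ldots, w_{\sigma(k)})$. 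Form the coderivations $E = \sum_k \widetilde{\tau_k} \in \mathrm{Coder}^1(TW)$ and $D = \sum_k \widetilde{\rho_k} \in \mathrm{Coder}^1(SW)$. The proposition is then equivalent to the implication $[E,E]_C = 0 \Longrightarrow [D,D]_C = 0$, which is what I would prove.

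The main combinatorial step is to expand $D \circ D$ on a generator $w_1 \odot \cdots \odot w_n$ of $SW$ using the explicit coderivation formula for $\widetilde{\rho_k}$ and the definition of $\rho_k$ as a symmetrization of $\tau_k$. Each term of the resulting sum is indexed by a triple $(i, \sigma, (\alpha,\beta))$ where $i + j = n+1$, $\sigma \in Sh(i, n-i)$ is the outer shuffle coming from the coderivation formula, and $(\alpha, \beta) \in S_i \times S_{n-i}$ is the inner permutation coming from the symmetrizations inside $\rho_i$ and $\rho_j$. Using the classical coset decomposition $S_n = Sh(i, n-i) \cdot (S_i \times S_{n-i})$, in which every element of $S_n$ factors uniquely, these three sums collapse into a single unrestricted sum over $S_n$ of expressions of the form $\tau_j(\tau_i(w_{\pi(1)}, \ldots, w_{\pi(i)}), w_{\pi(i+1)}, \ldots, w_{\pi(n)})$. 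The inner sum over $i + j = n+1$ for each fixed $\pi$ is exactly the expansion of $[E,E]_C$ applied to $w_{\pi(1)} \otimes \cdots \otimes w_{\pi(n)}$, which vanishes by hypothesis, so $[D,D]_C$ vanishes after the outer symmetrization.

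The principal obstacle, as in every transfer between $A_\infty$- and $L_\infty$-axioms, is the sign bookkeeping. One must reconcile the Koszul signs $\epsilon(\sigma)$, the permutation signs $(-1)^\sigma$, the suspension signs $(-1)^{k(k-1)/2}$, and the axiom-specific prefactors $(-1)^{i(j-1)}$ appearing in the higher Jacobi identity (\ref{hji}) and $(-1)^{\lambda(i+1) + i(|a_1| + \cdots + |a_{i-1}|)}$ appearing in the $A_\infty$-relation. Working at the level of the multilinear maps $\mu_k$ and $l_k$ directly is possible but tedious; the coderivation picture is cleaner because all of these signs are absorbed into the single Koszul rule governing the graded commutator $[-,-]_C$ on $\mathrm{Coder}^\bullet(TW)$ and $\mathrm{Coder}^\bullet(SW)$, so the whole argument reduces to the coset decomposition above together with the observation that skew-symmetrizing an expression that is already zero yields zero.
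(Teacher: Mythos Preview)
The paper does not give a proof of this proposition; it simply cites \cite{lada-markl}. Your coderivation strategy is essentially the one used there, but the combinatorial step has a gap. When you expand $\rho_j$ as a symmetrization of $\tau_j$, the permutation you call $\beta$ ranges over $S_j$, not $S_{n-i}$: $\rho_j$ has $j=n-i+1$ inputs, one of which is the output of $\rho_i$, so symmetrizing places the nested $\tau_i(\cdots)$ at an arbitrary slot $\lambda\in\{1,\ldots,j\}$ inside $\tau_j$, not only at slot $1$. For fixed $(i,j)$ the actual term count is $\binom{n}{i}\cdot i!\cdot j!=n!\cdot j$, whereas your coset decomposition $S_n=Sh(i,n-i)\cdot(S_i\times S_{n-i})$ produces only $n!$ terms; the missing factor of $j$ is precisely the sum over insertion points, without which your ``inner sum over $i+j=n+1$'' is \emph{not} the $W$-component of $[E,E]_C(w_{\pi(1)}\otimes\cdots\otimes w_{\pi(n)})$. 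Already for $n=2$ your expression drops the terms of the form $\tau_2(w_{\pi(1)},\tau_1(w_{\pi(2)}))$.

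The repair is to track $\lambda$ as well; the full expansion then reorganizes as $\sum_{\pi\in S_n}\epsilon(\pi)\,\mathrm{pr}_W(E\circ E)(w_{\pi(1)}\otimes\cdots\otimes w_{\pi(n)})$, which vanishes by hypothesis. The argument in \cite{lada-markl} sidesteps this bookkeeping by observing that the symmetrization $\chi:SW\hookrightarrow TW$ is an injective coalgebra morphism satisfying $E\circ\chi=\chi\circ D$; then $\chi\circ D^2=E^2\circ\chi=0$ forces $D^2=0$.
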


\begin{defn}
A compatible $A_\infty$-algebra is a triple $(A, \{ \mu_k \}_{k \geq 1}, \{ \mu'_k \}_{k \geq 1})$ in which $(A, \{ \mu_k \}_{k \geq 1})$, $(A, \{ \mu'_k \}_{k \geq 1})$ and $(A, \{ \mu_k + \mu'_k \}_{k \geq 1})$ are all $A_\infty$-algebras.
\end{defn}

It is easy to prove that (similar to Theorem \ref{comp-inf-gla}) a compatible $A_\infty$-algebra structure on a graded vector space $A$ is equivalent to a Maurer-Cartan element in the graded Lie algebra $(\mathrm{Coder}^\bullet_c (TW), \llbracket ~, ~ \rrbracket)$, where $W = s^{-1}A$.

\begin{prop}
Let $(A, \{ \mu_k \}_{k \geq 1}, \{ \mu'_k \}_{k \geq 1})$ be a compatible $A_\infty$-algebra. Then $(A, \{ l_k \}_{k \geq 1}, \{ l'_k \}_{k \geq 1})$ is a compatible $L_\infty$-algebra, where $\{ l_k \}_{k \geq 1}$ and $\{ l'_k \}_{k \geq 1}$ are the skew-symmetrizations of $\{ \mu_k \}_{k \geq 1}$ and $\{ \mu'_k\}_{k \geq 1}$, respectively.
\end{prop}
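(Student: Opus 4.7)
The plan is to reduce everything to Proposition \ref{skew-inf} applied to the sum. The key observation, made right after the definition of compatible $L_\infty$-algebras, is that condition (\ref{hji-comp}) holds if and only if $(L, \{l_k + l'_k\}_{k \geq 1})$ is itself an $L_\infty$-algebra. An entirely parallel statement holds in the $A_\infty$ setting, and indeed it is built directly into the definition of a compatible $A_\infty$-algebra: $(A, \{\mu_k + \mu'_k\}_{k \geq 1})$ is an $A_\infty$-algebra by hypothesis.

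First I would note that skew-symmetrization is a linear operation in the multilinear map being symmetrized; that is,
\[
\sum_{\sigma \in S_k} (-1)^\sigma (\mu_k + \mu'_k)(a_{\sigma(1)}, \ldots, a_{\sigma(k)}) \;=\; l_k(a_1,\ldots,a_k) + l'_k(a_1,\ldots,a_k).
\]
Hence the skew-symmetrization of the $A_\infty$-algebra $(A, \{\mu_k + \mu'_k\}_{k \geq 1})$ is precisely $\{l_k + l'_k\}_{k \geq 1}$. Applying Proposition \ref{skew-inf} three times (to $\{\mu_k\}$, $\{\mu'_k\}$, and $\{\mu_k+\mu'_k\}$), I conclude that each of $(A, \{l_k\}_{k \geq 1})$, $(A, \{l'_k\}_{k \geq 1})$, and $(A, \{l_k + l'_k\}_{k \geq 1})$ is an $L_\infty$-algebra. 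By the characterization recalled in the opening paragraph, this is exactly the assertion that $(A, \{l_k\}_{k \geq 1}, \{l'_k\}_{k \geq 1})$ is a compatible $L_\infty$-algebra.

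There is no serious obstacle here; the whole proof is a formal consequence of the linearity of the skew-symmetrization operator together with the fact that, in both the $A_\infty$ and $L_\infty$ worlds, compatibility is equivalent to the sum of the two structures again being of the same type. Alternatively, one could give a coalgebra-level proof by invoking the analogue of Theorem \ref{comp-inf-comp-mc} noted in the text for $A_\infty$-algebras (so that the pair $(\mu_k), (\mu'_k)$ correspond to commuting Maurer--Cartan elements $E, E' \in \mathrm{Coder}^1(TW)$) and then using the fact that the standard projection $TW \twoheadrightarrow SV$ intertwines the coderivations built from $\{\mu_k\}$ with those built from their skew-symmetrizations; this transports $[E,E']_C = 0$ to the corresponding vanishing bracket in $\mathrm{Coder}^\bullet(SV)$. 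But the linear, elementary argument above is the shortest route and is what I would write down.
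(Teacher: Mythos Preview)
Your proposal is correct and follows essentially the same route as the paper: apply Proposition \ref{skew-inf} to $\{\mu_k\}$, $\{\mu'_k\}$, and $\{\mu_k+\mu'_k\}$, use the linearity of skew-symmetrization to identify the last with $\{l_k+l'_k\}$, and invoke the ``sum'' characterization of compatibility. The paper's own proof is exactly this, only stated a bit more tersely (it leaves the linearity of skew-symmetrization implicit).
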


\begin{proof}
Since $(A, \{ \mu_k \}_{k \geq 1})$ and $(A, \{ \mu'_k \}_{k \geq 1})$ are $A_\infty$-algebras, it follows that $(A, \{ l_k \}_{k \geq 1})$ and $(A, \{ l'_k \}_{k \geq 1})$ are $L_\infty$-algebras. Further, $(A, \{ \mu_k + \mu'_k \}_{k \geq 1})$ is an $A_\infty$-algebra implies that $(A, \{ l_k + l'_k \}_{k \geq 1})$ is an $L_\infty$-algebra (by Proposition \ref{skew-inf}). Thus, we get that $(A, \{ l_k \}_{k \geq 1}, \{ l'_k \}_{k \geq 1})$ is a compatible $L_\infty$-algebra.
\end{proof}

\subsection*{Nijenhuis operators on $L_\infty$-algebras}
In this subsection, we state the main result from \cite{azimi} about Nijenhuis operators on $L_\infty$-algebras, and observe that Nijenhuis operators induce compatible $L_\infty$-algebras. Let $(L, \{ l_k \}_{k \geq 1})$ be an $L_\infty$-algebra. Consider the corresponding Maurer-Cartan element $D \in \mathrm{Coder}^1(SV)$ in the graded Lie algebra $(\mathrm{Coder}^\bullet (SV), [~,~]_C)$, where $V =s^{-1} L$.

Let $\{ N_k : L^{\otimes k} \rightarrow L \}_{k \geq 1}$ be a collection of skew-symmetric multilinear maps with $\mathrm{deg}(l_k) = 1-k$, for $k \geq 1$. Consider the collection $\{ \mathcal{N}_k : V^{\otimes k} \rightarrow V \}_{k \geq 1}$ of degree $0$ symmetric multilinear maps on $V$ given by $\mathcal{N}_k = (-1)^{\frac{k(k-1)}{2}} s^{-1} \circ N_k \circ s^{\otimes k}$, for $k \geq 1$. This collection of symmetric multilinear maps lifts to a degree $0$ coderivation $N = \sum_{k \geq 1} \widetilde{\mathcal{N}_k} \in \mathrm{Coder}^0(SV)$ on the symmetric coalgebra $SV$.

\begin{defn}
A collection $\{ N_k : L^{\otimes k} \rightarrow L\}_{k \geq 1}$ of skew-symmetric multilinear maps (with $\mathrm{deg}(N_k) = 1-k$, for $k \geq 1$) is said to be a Nijenhuis operator on the $L_\infty$-algebra $(L, \{l_k\}_{k \geq 1})$ if there exists a degree $0$ coderivation ${P} \in \mathrm{Coder}^0(SV)$ such that
\begin{align*}
[N, [N, D]_C ]_C = [P, D]_C ~~~~ \text{ and } ~~~~ [N,P]_C =0.
\end{align*}
\end{defn}

Such a $P$ is called a square of $N$. The following result has been proved in \cite[Theorem 2.4]{azimi}.

\begin{prop}
Let $(L, \{l_k\}_{k \geq 1})$ be an $L_\infty$-algebra with the corresponding Maurer-Cartan element $D \in \mathrm{Coder}^1 (SV)$ in the graded Lie algebra $(\mathrm{Coder}^\bullet (SV), [~,~]_C)$, where $V = s^{-1} L$. Suppose $\{ N_k : L^{\otimes k} \rightarrow L\}_{k \geq 1}$ is a Nijenhuis operator with the corresponding coderivation $N \in \mathrm{Coder}^0 (SV)$. Then the coderivation $D_N := [N, D]_C \in \mathrm{Coder}^1 (SV)$ satisfies $[D_N, D_N]_C = 0$ and $[D,D_N ]_C = 0$. Hence $D_N$ corresponds to an $L_\infty$-algebra structure $(L, \{ l_k^N \}_{k \geq 1})$ on the same graded vector space $L$ and $(L, \{l_k \}_{k \geq 1}, \{l_k^N \}_{k \geq 1})$ is a compatible $L_\infty$-algebra.
\end{prop}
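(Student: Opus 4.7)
The plan is to work entirely inside the graded Lie algebra $(\mathrm{Coder}^\bullet(SV), [~,~]_C)$, prove the two bracket identities $[D, D_N]_C = 0$ and $[D_N, D_N]_C = 0$ by repeated use of the graded Jacobi identity, and then invoke Theorem \ref{comp-inf-comp-mc} to translate back to $L_\infty$-language. The degrees to keep track of are $|N| = 0$ and $|D| = |D_N| = 1$.

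The workhorse observation I would record first is that $d := [D, -]_C$ squares to zero on $\mathrm{Coder}^\bullet(SV)$: this follows from a single application of graded Jacobi together with $[D,D]_C = 0$, which yields $2[D,[D,X]_C]_C = 0$ for every coderivation $X$. In particular, graded skew-symmetry gives $D_N = [N, D]_C = -[D, N]_C$, and hence
\[
[D, D_N]_C \;=\; -[D, [D, N]_C]_C \;=\; -d^2(N) \;=\; 0.
\]

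For the harder identity $[D_N, D_N]_C = 0$, I would expand $[[N, D]_C, [N, D]_C]_C$ via graded Jacobi as
\[
[[N, D]_C, [N, D]_C]_C \;=\; [[[N, D]_C, N]_C, D]_C \,+\, [N, [[N, D]_C, D]_C]_C.
\]
The second summand vanishes because $[[N, D]_C, D]_C = [D_N, D]_C = [D, D_N]_C = 0$ by the previous step. For the first summand, graded skew-symmetry yields $[[N, D]_C, N]_C = -[N, [N, D]_C]_C$, which equals $-[P, D]_C$ by the Nijenhuis hypothesis $[N, [N, D]_C]_C = [P, D]_C$. The first summand is therefore $-[[P, D]_C, D]_C$, and a final appeal to $d^2 = 0$ applied to $P$ (using $[P,D]_C = -[D,P]_C$) shows this vanishes as well.

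With both bracket identities in hand, $D_N \in \mathrm{Coder}^1(SV)$ is a Maurer-Cartan element and therefore, by the correspondence recalled at the end of Section \ref{sec2}, determines an $L_\infty$-algebra structure $\{l_k^N\}_{k \geq 1}$ on $L$; together with $[D, D_N]_C = 0$ this exhibits $D$ and $D_N$ as a pair of compatible Maurer-Cartan elements, so Theorem \ref{comp-inf-comp-mc} finishes the proof. The only genuinely non-formal input used is the defining Nijenhuis identity $[N, [N, D]_C]_C = [P, D]_C$; the auxiliary condition $[N, P]_C = 0$ is not needed for this particular statement (it enters only if one wants to iterate the construction). The main thing to watch is the Koszul-sign bookkeeping coming from the fact that $N$ sits in degree $0$ while $D$ and $D_N$ sit in degree $1$, but no individual step is longer than one application of Jacobi or skew-symmetry, so I do not expect any serious obstacle.
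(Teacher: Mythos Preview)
Your argument is correct: each step is a single application of graded Jacobi or graded skew-symmetry in $(\mathrm{Coder}^\bullet(SV),[~,~]_C)$, the sign bookkeeping with $|N|=|P|=0$ and $|D|=|D_N|=1$ checks out, and your observation that the auxiliary condition $[N,P]_C=0$ is not needed here is also correct. The paper itself gives no proof of this proposition at all --- it simply cites \cite[Theorem~2.4]{azimi} --- so your write-up supplies a clean self-contained argument that the paper lacks, and there is no ``paper's approach'' to compare against.
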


\subsection*{Compatible $V$-datas}

Let $L$ be a graded Lie algebra (with the bracket $[~,~]$), $\mathfrak{a} \subset L$ be an abelian Lie subalgebra, $P : L \rightarrow L$ be a projection such that $\mathrm{im }(P) = \mathfrak{a}$ and $\mathrm{ker }(P) \subset L$ be a Lie subalgebra. Let $\triangle \in \mathrm{ker }(P)_1$ be an element that satisfies $[\triangle, \triangle] = 0$. Such an element is called a `square-zero odd element'. In this case, the quadruple $(L, \mathfrak{a}, P, \triangle)$ is called a $V$-data \cite{voro}.

Given a $V$-data $(L, \mathfrak{a}, P, \triangle)$, the graded vector space $\mathfrak{a}$ carries some degree $1$ symmetric multilinear maps given by
\begin{align*}
\rho_k (a_1, \ldots, a_k ) := P [ \cdots [ [ \triangle, a_1 ], a_2], \ldots, a_k ], \text{ for } k \geq 1.
\end{align*}
Moreover, the pair $(s \mathfrak{a}, \{ l_k \}_{k \geq 1})$ is an $L_\infty$-algebra \cite{voro}, where $s \mathfrak{a}$ is the suspension of $\mathfrak{a}$ and $l_k$'s are given by $l_k = s \circ \rho_k \circ (s^{-1})^{\otimes k},$ for $k \geq 1$.

\begin{defn}
Let $(L, \mathfrak{a}, P, \triangle)$ and $(L, \mathfrak{a}, P, \triangle')$ be two $V$-datas with the same underlying structures except  the square-zero odd elements. They are said to be compatible if $[\triangle, \triangle'] = 0$.
\end{defn}

\begin{prop}\label{comp-v-comp-l}
Let $(L, \mathfrak{a}, P, \triangle)$ and $(L, \mathfrak{a}, P, \triangle')$ be two compatible $V$-datas with the corresponding $L_\infty$-algebras $(s\mathfrak{a}, \{ l_k \}_{k \geq 1})$ and $(s\mathfrak{a}, \{ l'_k \}_{k \geq 1})$. Then $(s \mathfrak{a}, \{ l_k \}_{k \geq 1}, \{ l'_k \}_{k \geq 1})$ is a compatible $L_\infty$-algebra.
\end{prop}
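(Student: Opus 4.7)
The plan is to exploit the observation, already recorded in the paper just after the definition of a compatible $L_\infty$-algebra, that the compatibility condition \eqref{hji-comp} holds for $(\{l_k\}, \{l'_k\})$ if and only if $(s\mathfrak{a}, \{l_k + l'_k\}_{k \geq 1})$ is itself an $L_\infty$-algebra. So the task reduces to producing an $L_\infty$-algebra structure on $s\mathfrak{a}$ whose higher brackets are exactly $l_k + l'_k$, and the natural source for such a structure is Voronov's derived bracket construction applied to an appropriate third $V$-data, namely the one built from $\triangle + \triangle'$.

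The first step is to verify that the quadruple $(L, \mathfrak{a}, P, \triangle + \triangle')$ is again a $V$-data. Clearly $\triangle + \triangle' \in \ker(P)_1$ since $\ker(P)$ is a graded subspace containing both summands. For the square-zero condition, bilinearity of the bracket and graded symmetry (with both summands of degree $1$, so $[\triangle', \triangle] = [\triangle, \triangle']$) give
\begin{align*}
[\triangle + \triangle', \triangle + \triangle'] = [\triangle, \triangle] + [\triangle', \triangle'] + 2\,[\triangle, \triangle'] = 0,
\end{align*}
using $[\triangle,\triangle] = [\triangle', \triangle'] = 0$ and the compatibility hypothesis $[\triangle, \triangle'] = 0$. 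Hence Voronov's theorem applies and equips $s\mathfrak{a}$ with an $L_\infty$-algebra structure whose degree $1$ symmetric brackets on $\mathfrak{a}$ are
\begin{align*}
\rho_k^{\triangle + \triangle'}(a_1, \ldots, a_k) = P\big[ \cdots [[\triangle + \triangle', a_1], a_2], \ldots, a_k \big].
\end{align*}

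The second step is to identify these brackets with the sum $\rho_k + \rho'_k$. Expanding the innermost bracket $[\triangle + \triangle', a_1] = [\triangle, a_1] + [\triangle', a_1]$ and then using bilinearity of each subsequent bracket $[-, a_i]$ together with linearity of $P$ shows at once that $\rho_k^{\triangle + \triangle'} = \rho_k + \rho'_k$, whence $l_k^{\triangle + \triangle'} = s \circ (\rho_k + \rho'_k) \circ (s^{-1})^{\otimes k} = l_k + l'_k$ for every $k \geq 1$. Combined with the reduction of the first paragraph, this yields the desired compatibility. The only step that requires any care is the graded-sign bookkeeping in the square-zero computation for $\triangle + \triangle'$; no genuine obstacle is expected, since everything follows from linearity of the derived bracket construction in the odd element $\triangle$.
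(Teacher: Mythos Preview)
Your proof is correct and follows essentially the same route as the paper's own argument: form the $V$-data $(L,\mathfrak{a},P,\triangle+\triangle')$, observe that its derived brackets are $l_k+l'_k$ by linearity, and conclude via the equivalence between the compatibility condition and $(s\mathfrak{a},\{l_k+l'_k\})$ being an $L_\infty$-algebra. You have simply spelled out the square-zero check and the linearity step in more detail than the paper does.
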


\begin{proof}
Since $(L, \mathfrak{a}, P, \triangle)$ and $(L, \mathfrak{a}, P, \triangle')$ are two compatible $V$-datas, it follows that $(L, \mathfrak{a}, P, \triangle + \triangle')$ is a $V$-data. If $(s \mathfrak{a}, \{ l^*_k \}_{k \geq 1})$ is the $L_\infty$-algebra corresponding to the $V$-data $(L, \mathfrak{a}, P, \triangle + \triangle')$, then it is easy to see that $l^*_k = l_k + l'_k$, for $k \geq 1$. This implies that $(s \mathfrak{a}, \{ l_k\}_{k \geq 1}, \{l'_k \}_{k \geq 1})$ is a compatible $L_\infty$-algebra.
\end{proof}

In the following, we give an example of compatible $V$-datas and the use of the corresponding compatible $L_\infty$-algebra. Let $\mathfrak{g}$ and $V$ be two vector spaces. Consider the graded Lie algebra 
\begin{align*}
L = \big( \oplus_{n \geq 0} \mathrm{Hom} (\wedge^{n+1} (\mathfrak{g} \oplus V), \mathfrak{g} \oplus V), [~,~]_\mathsf{NR}  \big)
\end{align*}
on the space of all skew-symmetric multilinear maps on $\mathfrak{g} \oplus V$ with the Nijenhuis-Richardson bracket. Then $\mathfrak{a} = \oplus_{n \geq 0} \mathrm{Hom} (\wedge^{n+1} V, \mathfrak{g}) \subset L$ is an abelian Lie subalgebra. Let $P : L \rightarrow L$ be the projection onto the subspace $\mathfrak{a}$. Suppose there are two skew-symmetric bilinear brackets $[~,~], [~,~]' : \mathfrak{g} \times \mathfrak{g} \rightarrow \mathfrak{g}$ and bilinear maps $\rho , \rho' : \mathfrak{g} \times V \rightarrow V$. Consider the elements
%\begin{align*}
$\triangle , \triangle' \in \mathrm{Hom}(\wedge^2 (\mathfrak{g} \oplus V), \mathfrak{g} \oplus V)$ defined respectively by
\begin{align*}
\triangle ((x,u), (y,v)) = ([x,y], \rho (x, v) - \rho (y) u) ~ \text{ and } ~
\triangle' ((x,u), (y,v)) = ([x,y]', \rho' (x) v - \rho' (y) u),
\end{align*}
for $(x,u), (y, v) \in \mathfrak{g} \oplus V$. Then we have the following.
%\end{align*}  

\begin{prop}
With the above notations, $(\mathfrak{g}, [~,~], [~,~]')$ is a compatible Lie algebra and $(V, \rho, \rho')$ is a representation if and only if $[\triangle, \triangle]_\mathsf{NR} = 0$, $[\triangle', \triangle']_\mathsf{NR} = 0$ and $[\triangle, \triangle']_\mathsf{NR} = 0$.
\end{prop}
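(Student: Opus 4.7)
\medskip

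\noindent\textbf{Proof proposal.} The plan is to reduce the statement to the classical semidirect-product identity for the Nijenhuis--Richardson bracket and then use a polarization-type argument to extract the cross condition from $[\triangle,\triangle']_\mathsf{NR}=0$. Concretely, it is a well-known fact (essentially a repackaging of the Jacobi identity and the representation axiom into the Maurer--Cartan equation for $[~,~]_\mathsf{NR}$) that for an element $\triangle \in \mathrm{Hom}(\wedge^2 (\mathfrak{g}\oplus V), \mathfrak{g}\oplus V)$ of the form $\triangle((x,u),(y,v)) = ([x,y], \rho(x)v - \rho(y)u)$, the condition $[\triangle,\triangle]_\mathsf{NR}=0$ is equivalent to the pair of axioms: $(\mathfrak{g},[~,~])$ is a Lie algebra and $(V,\rho)$ is a representation of it. First I would record this lemma (or cite it directly; it is the standard semidirect-product construction), and apply it once to $\triangle$ and once to $\triangle'$. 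This takes care of the two ``diagonal'' vanishing conditions $[\triangle,\triangle]_\mathsf{NR}=0$ and $[\triangle',\triangle']_\mathsf{NR}=0$, which correspond exactly to $(\mathfrak{g},[~,~])$, $(V,\rho)$ and $(\mathfrak{g},[~,~]')$, $(V,\rho')$ being a Lie algebra with representation, respectively.

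Next, I would observe that $\triangle + \triangle'$ has exactly the same semidirect-product shape, with $[~,~]$ replaced by $[~,~]+[~,~]'$ and $\rho$ replaced by $\rho+\rho'$. Since $\triangle,\triangle'$ lie in degree $1$ of the graded Lie algebra $(L,[~,~]_\mathsf{NR})$, the bracket $[~,~]_\mathsf{NR}$ is graded-symmetric on them, and therefore the bilinearity of $[~,~]_\mathsf{NR}$ gives the polarization identity
\begin{align*}
[\triangle+\triangle', \triangle+\triangle']_\mathsf{NR} \;=\; [\triangle,\triangle]_\mathsf{NR} \,+\, 2\,[\triangle,\triangle']_\mathsf{NR} \,+\, [\triangle',\triangle']_\mathsf{NR}.
\end{align*}
Applying the same semidirect-product lemma to $\triangle+\triangle'$, we see that $[\triangle+\triangle',\triangle+\triangle']_\mathsf{NR} = 0$ is equivalent to $[~,~]+[~,~]'$ being a Lie bracket on $\mathfrak{g}$ with $\rho+\rho'$ a representation. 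Combined with the two diagonal identities, the polarization shows that $[\triangle,\triangle']_\mathsf{NR}=0$ holds precisely when $[~,~]+[~,~]'$ is Lie and $\rho+\rho'$ is a representation.

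Finally I would unwind what these two extra conditions on the sum mean. Expanding the Jacobi identity for $[~,~]+[~,~]'$ and subtracting the Jacobi identities for $[~,~]$ and for $[~,~]'$ gives exactly the compatibility identity (\ref{comp-lie-def-id}) defining a compatible Lie algebra. Likewise, expanding the representation axiom for $\rho+\rho'$ with respect to the sum bracket and subtracting the two individual representation axioms yields exactly the cross identity $\rho([x,y]') + \rho'([x,y]) = [\rho(x),\rho'(y)] + [\rho'(x),\rho(y)]$ that defines a representation $(V,\rho,\rho')$ of a compatible Lie algebra. Putting the three steps together gives the claimed equivalence in both directions.

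The main technical obstacle is really bookkeeping: verifying (or carefully quoting) the semidirect-product lemma $[\triangle,\triangle]_\mathsf{NR}=0 \Leftrightarrow (\text{Lie} + \text{rep})$ for a trilinear expression on $\mathfrak{g}\oplus V$. One must separate the output into its $\mathfrak{g}$-component (yielding Jacobi for $[~,~]$) and its $V$-component (yielding both the representation law for $\rho$ and the abelian-ness of $V$, which is automatic since $\triangle$ has no $V\otimes V$ term). Once this lemma is in hand, the polarization step is formal and the identification with (\ref{comp-lie-def-id}) and the compatibility axiom for representations is a direct expansion, so no further genuine computation is needed beyond what is already implicit in the definition of a compatible Lie algebra.
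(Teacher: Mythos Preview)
Your argument is correct and takes a genuinely different route from the paper. The paper proceeds by brute-force expansion: it writes out $[\triangle,\triangle]_\mathsf{NR}$, $[\triangle',\triangle']_\mathsf{NR}$ and $[\triangle,\triangle']_\mathsf{NR}$ explicitly on a triple $\big((x,u),(y,v),(z,w)\big)$, separates each into its $\mathfrak{g}$- and $V$-components, and reads off directly that the vanishing of the first two is the Jacobi plus representation axiom for $([~,~],\rho)$ and $([~,~]',\rho')$ respectively, while the vanishing of the mixed bracket is literally the compatibility identity (\ref{comp-lie-def-id}) together with the cross-representation identity. By contrast, you invoke the semidirect-product Maurer--Cartan lemma once as a black box and then polarize, using that $[\triangle+\triangle',\triangle+\triangle']_\mathsf{NR}=[\triangle,\triangle]_\mathsf{NR}+2[\triangle,\triangle']_\mathsf{NR}+[\triangle',\triangle']_\mathsf{NR}$ (which is legitimate since degree-$1$ elements bracket symmetrically and the ground field has characteristic $0$). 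Your approach is cleaner conceptually and avoids redoing the trilinear expansion three times; the paper's approach has the virtue of exhibiting the compatibility identities \emph{directly} as the components of $[\triangle,\triangle']_\mathsf{NR}$, without passing through the sum $[~,~]+[~,~]'$ and then subtracting. Either way the content is the same, and your proof is complete once the semidirect-product lemma is recorded.
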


\begin{proof}
For any $(x, u), (y, v), (z, w) \in \mathfrak{g} \oplus V$, we have
\begin{align*}
&[\triangle, \triangle]_\mathsf{NR} ( (x, u), (y, v), (z, w) )  \\
&= 2 \big( [[x, y],z] - [[x,z], y]+[[y,z],x],~ \rho ([x,y])w - [\rho(x), \rho(y)]w - \rho([x, z])v + [\rho (x), \rho (z)] v \\
& \qquad \qquad \qquad \qquad \qquad \qquad \qquad \qquad + \rho ([y,z])u - [\rho (y), \rho(z)] u   \big),\\\\
&[\triangle', \triangle']_\mathsf{NR} ( (x, u), (y, v), (z, w) )  \\
&= 2 \big( [[x, y]',z]' - [[x,z]', y]'+[[y,z]',x]',~ \rho' ([x,y]')w - [\rho'(x), \rho'(y)]w - \rho'([x, z]')v + [\rho' (x), \rho' (z)] v \\
& \qquad \qquad \qquad \qquad \qquad \qquad \qquad \qquad + \rho' ([y,z]')u - [\rho' (y), \rho'(z)] u   \big)
\end{align*}
and
\begin{align*}
&[\triangle, \triangle']_\mathsf{NR} ( (x, u), (y, v), (z, w) )  \\
&= \big( [[x, y]',z] - [[x,z]', y]+[[y,z]',x] + [[x, y],z]' - [[x,z], y]'+[[y,z],x]', \\
& \quad \quad \rho ([x,y]')w - [\rho(x), \rho'(y)]w - \rho([x, z]')v + [\rho (x), \rho' (z)] v + \rho ([y,z]')u - [\rho (y), \rho'(z)] u \\
&\quad \quad + \rho' ([x,y])w - [\rho'(x), \rho(y)]w - \rho'([x, z])v + [\rho' (x), \rho (z)] v + \rho' ([y,z])u - [\rho' (y), \rho(z)] u   \big).
\end{align*}
This shows that $(\mathfrak{g}, [~,~], [~,~]')$ is a compatible Lie algebra and $(V, \rho, \rho')$ is a representation if and only if $[\triangle, \triangle]_\mathsf{NR} = 0$, $[\triangle', \triangle']_\mathsf{NR} = 0$ and $[\triangle, \triangle']_\mathsf{NR} = 0$.
\end{proof}

\medskip

Let $(\mathfrak{g}, [~,~], [~,~]')$ be a compatible Lie algebra and $(V, \rho, \rho')$ be a representation. Then it follows from the above Proposition that $(L, \mathfrak{a}, P, \triangle)$ and $(L, \mathfrak{a}, P, \triangle')$ are compatible $V$-datas. Therefore, the space $s \mathfrak{a} = \oplus_{n \geq 0} \mathrm{Hom}(\wedge^n V, \mathfrak{g})$ carry compatible graded Lie brackets (as higher brackets are zero for degree reason) given by
\begin{align*}
\lbrace \!  [ P, Q ] \! \rbrace = (-1)^m [[\triangle, P ]_\mathsf{NR}, Q]_\mathsf{NR} ~~~ \text{ and } ~~~ \lbrace \!  [ P, Q ] \! \rbrace' = (-1)^m [[\triangle', P ]_\mathsf{NR}, Q]_\mathsf{NR},
\end{align*}
for $P \in (s \mathfrak{a})_m = \mathrm{Hom}(\wedge^m V, \mathfrak{g})$ and $Q \in (s \mathfrak{a})_n = \mathrm{Hom}(\wedge^n V, \mathfrak{g})$. In other words, $(s \mathfrak{a}, \lbrace \!  [ ~, ~ ] \! \rbrace , \lbrace \!  [ ~, ~ ] \! \rbrace')$ is a compatible graded Lie algebra. We will use this graded Lie algebra to characterize relative Rota-Baxter operators on compatible Lie algebras.

\begin{defn}
Let $(\mathfrak{g}, [~,~], [~,~]')$ be a compatible Lie algebra and $(V, \rho, \rho')$ be a representation. A linear map $R : V \rightarrow \mathfrak{g}$ is said to be a relative Rota-Baxter operator if it satisfies
\begin{align*}
[R (v) , R(w)] = R \big( \rho (R (v)) w - \rho (R(w)) v \big) ~~ \text{ and } ~~ [R (v) , R(w)]' = R \big( \rho' (R (v)) w - \rho' (R(w)) v \big), ~ \text{ for } v, w \in V.
\end{align*}
\end{defn}

We have the following result about the characterization of relative Rota-Baxter operators.

\begin{prop}
A linear map $R: V \rightarrow \mathfrak{g}$ is a relative Rota-Baxter operator if and only if $R$ is a Maurer-Cartan element in the compatible graded Lie algebra $(s\mathfrak{a}, \lbrace \!  [ ~, ~ ] \! \rbrace , \lbrace \!  [ ~, ~ ] \! \rbrace')$, i.e. $R$ satisfies $\lbrace \!  [ R,R ] \! \rbrace = \lbrace \!  [ R,R ] \! \rbrace' = 0$.
\end{prop}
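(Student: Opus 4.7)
The strategy is to unpack the two Maurer--Cartan defects explicitly as elements of $\mathrm{Hom}(\wedge^2 V, \mathfrak{g})$ and recognise them as the defining equations of a relative Rota--Baxter operator. View $R \in \mathrm{Hom}(V, \mathfrak{g})$ as a degree-$0$ element of $L$ by the evident extension $(x, v) \mapsto (R(v), 0)$; this element lies in the abelian subalgebra $\mathfrak{a}$ and hence has degree $1$ in $s\mathfrak{a}$. With $m = 1$ the defining formula of $\lbrace \! [ ~,~ ] \! \rbrace$ (resp.\ $\lbrace \! [ ~,~ ] \! \rbrace'$) becomes
\[
\lbrace \! [ R, R ] \! \rbrace = -[[\triangle, R]_\mathsf{NR}, R]_\mathsf{NR}, \qquad \lbrace \! [ R, R ] \! \rbrace' = -[[\triangle', R]_\mathsf{NR}, R]_\mathsf{NR},
\]
so $R$ is a Maurer--Cartan element iff both iterated brackets vanish.

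The first step is to compute $[\triangle, R]_\mathsf{NR} = \triangle \diamond R - R \diamond \triangle$ from the Nijenhuis--Richardson definition, using $Sh(1,1)$ for $\triangle \diamond R$ and $Sh(2,0)$ (a single shuffle) for $R \diamond \triangle$. A short calculation yields, on inputs $(x_1, u_1), (x_2, u_2) \in \mathfrak{g} \oplus V$,
\[
[\triangle, R]_\mathsf{NR}\bigl((x_1, u_1), (x_2, u_2)\bigr) = \bigl([R(u_1), x_2] - [R(u_2), x_1] - R(\rho(x_1) u_2 - \rho(x_2) u_1),~ \rho(R(u_1)) u_2 - \rho(R(u_2)) u_1 \bigr).
\]
Bracketing once more with $R$ and restricting to inputs of the form $(0, v), (0, w)$ (which is what produces an element of $\mathrm{Hom}(\wedge^2 V, \mathfrak{g}) \subset (s\mathfrak{a})_2$) kills most terms, since $R$ annihilates the $\mathfrak{g}$-summand; what survives is
\[
[[\triangle, R]_\mathsf{NR}, R]_\mathsf{NR}\bigl((0, v), (0, w)\bigr) = \bigl(2[R(v), R(w)] - 2R(\rho(R(v))w - \rho(R(w))v),~ 0\bigr).
\]
Therefore $\lbrace \! [ R, R ] \! \rbrace(v, w) = 0$ precisely when $[R(v), R(w)] = R(\rho(R(v))w - \rho(R(w))v)$, which is the first defining equation of a relative Rota--Baxter operator.

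Carrying out the identical calculation with $\triangle'$ in place of $\triangle$ (and hence $[~,~]'$ and $\rho'$ throughout) gives
\[
\lbrace \! [ R, R ] \! \rbrace'(v, w) = -2\bigl([R(v), R(w)]' - R(\rho'(R(v))w - \rho'(R(w))v)\bigr),
\]
whose vanishing is the second Rota--Baxter condition. Together the two identities exactly recover the definition of a relative Rota--Baxter operator on $(\mathfrak{g}, [~,~], [~,~]')$, establishing the biconditional. The principal technical nuisance is the bookkeeping of the two shuffle sums and the Koszul signs in the iterated Nijenhuis--Richardson bracket; but once $R$'s annihilation of the $\mathfrak{g}$-summand is exploited on purely $V$-valued inputs, the many a~priori terms collapse to precisely the two Rota--Baxter identities, and the overall factor $-2$ is absorbed.
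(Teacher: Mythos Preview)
Your proof is correct and takes the same approach as the paper: compute the two derived brackets $\lbrace\![R,R]\!\rbrace$ and $\lbrace\![R,R]\!\rbrace'$ explicitly and identify their vanishing with the two Rota--Baxter identities. The paper's own proof simply quotes the outcome of the calculation (citing \cite{tang-o}), stating
\[
\lbrace\![R,R]\!\rbrace(v,w) = 2\big(R(\rho(R(v))w) - R(\rho(R(w))v) - [R(v),R(w)]\big)
\]
and the primed analogue, whereas you carry out the intermediate step $[\triangle,R]_\mathsf{NR}$ and the second Nijenhuis--Richardson bracket by hand; the content and the final expressions agree (your sign convention displays the factor $-2$, the paper the equivalent $+2$ on the negated expression).
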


\begin{proof}
By a straightforward calculation (see \cite{tang-o}), we get that
\begin{align*}
\lbrace \!  [ R,R ] \! \rbrace (u,v) =~& 2 \big(  R (\rho (R(v)) w) - R (\rho (R(w)) v)  - [R(v), R(w)]  \big),\\
\lbrace \!  [ R,R ] \! \rbrace' (u,v) =~& 2 \big(  R (\rho' (R(v)) w) - R (\rho' (R(w)) v)  - [R(v), R(w)]' \big),
\end{align*}
for $v,w \in V$. This shows that $R$ is a relative Rota-Baxter operator if and only if $\lbrace \!  [ R,R ] \! \rbrace  = \lbrace \!  [ R,R ] \! \rbrace' = 0$.
\end{proof}

\subsection*{Compatible Courant algebroids}

Courant algebroids were introduced in \cite{liu-cou} as a double object of Lie bialgebroids. A Courant algebroid over a smooth manifold $M$ consists of a quadruple $(E,  \langle ~,~ \rangle, [~,~]_E, \rho)$ of a smooth vector bundle $E$ over $M$, a symmetric nondegenerate $C^\infty (M)$-valued pairing $\langle ~, ~ \rangle : \Gamma E \times \Gamma E \rightarrow C^\infty (M)$, a bilinear bracket $[~,~]_E : \Gamma E \times \Gamma E$ and a bundle map $\rho : E \rightarrow TM$ satisfying the following set of identities:
\begin{itemize}
\item $[X, [Y,Z]_E ]_E = [[X,Y]_E,Z]_E + [Y, [X,Z]_E ]_E$ ~~~ (left Leibniz identity),
\item $[X, f Y]_E = f [X, Y]_E + \rho (X) (f ) Y,$
\item $\rho [X,Y]_E = [\rho (X), \rho (Y)],$
\item $[X,X]_E = \frac{1}{2} \mathcal{D} \langle X, X \rangle,$
\item $\rho (X) \langle Y, Z \rangle = \langle [X,Y]_E, Z \rangle + \langle Y, [X,Z]_E \rangle,$
\end{itemize}
for $X,Y,Z \in \Gamma E$ and $f \in C^\infty (M)$. Here $\mathcal{D} : C^\infty (M) \rightarrow \Gamma E$ is the composition $C^\infty (M) \xrightarrow{ d } \Omega^1 (M) \xrightarrow{\rho^*} \Gamma E^* \xrightarrow{ \simeq } \Gamma E$, where the last isomorphism is induced by the pairing $\langle ~, ~ \rangle$.

In \cite{l-cou} Roytenberg and Weinstein showed that a Courant algebroid $(E,  \langle ~,~ \rangle, [~,~]_E, \rho)$  associates an $L_\infty$-algebra on the graded vector space $L (E) := C^\infty (M) \oplus \Gamma E$ ~ (where $C^\infty (M)$ is concentrated in degree $-1$ and $\Gamma E$ is concentrated in degree $0$) whose structure maps are given by
\begin{align*}
&l_1 = \mathcal{D},\\
&l_2 (X,Y) = \lfloor X , Y \rfloor_E := [X,Y]_E - \frac{1}{2} \mathcal{D} \langle X, Y \rangle,\\
&l_2 (X, f ) =  - l_2 (f, X) = \frac{1}{2} \langle X, \mathcal{D} f \rangle,\\
&l_3 (X, Y, Z) = - \frac{1}{6} \langle \lfloor X, Y \rfloor_E , Z \rangle + c.p.
\end{align*}
(and the higher multilinear maps are vanish), for $X,Y,Z \in \Gamma E$ and $f \in C^\infty (M)$. Here we followed the convension of \cite{roy-thesis}. We denote this $L_\infty$-algebra by $(L(E), l_1, l_2, l_3)$.

\begin{defn}
Two Courant algebroids $(E,  \langle ~,~ \rangle, [~,~]_E, \rho)$  and $(E,  \langle ~,~ \rangle, [~,~]'_E, \rho')$ on the underlying same vector bundle $E$ and pairing $\langle ~, ~ \rangle$ are said to be compatible if the following conditions are hold:
\begin{align}
[X, [Y,Z]_E ]'_E + [X, [Y,Z]'_E ]_E =~& [[X,Y]_E,Z]'_E + [[X,Y]'_E,Z]_E + [Y, [X,Z]_E ]'_E + [Y, [X,Z]'_E ]_E, \label{cou-comp1}\\
\rho ([X, Y]_E') + \rho' ([X,Y]_E) =~& [ \rho (X), \rho' (Y)] + [\rho' (X), \rho (Y)], \label{cou-comp2}
\end{align} 
for $X,Y,Z \in \Gamma E$. Note that the conditions (\ref{cou-comp1}) and (\ref{cou-comp2}) is equivalent to the fact that the new quadruple $(E, \langle ~, ~\rangle, [~,~]_E + [~,~]_E', \rho + \rho')$ is also a Courant algebroid.
\end{defn}

In \cite{yks}, Kosmann-Schwarzbach introduced Nijenhuis operators on a given Courant algebroid and show that certain Nijenhuis operators induced a deformed Courant algebroid structure. Moreover, the deformed one is compatible with the given one. In the following, we prove that compatible Courant algebroids give rise to compatible $L_\infty$-algebras.

\begin{prop}
Let $(E, \langle ~,~ \rangle,  [~,~]_E, \rho)$  and $(E, \langle ~,~ \rangle,  [~,~]'_E, \rho')$ be two compatible Courant algebroids. Consider the corresponding $L_\infty$-algebras $(L(E), l_1, l_2, l_3)$ and $(L(E), l'_1, l'_2, l'_3)$ on the graded vector space $L(E)$. Then $(L(E), \{ l_1, l_2, l_3 \}, \{ l'_1, l'_2, l'_3 \})$ is a compatible $L_\infty$-algebra.
\end{prop}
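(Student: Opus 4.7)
The plan is to reduce the statement to the characterization, already emphasized in the definition of a compatible $L_\infty$-algebra, that the pair $(L(E), \{l_k + l'_k\}_{k \geq 1})$ being an $L_\infty$-algebra is equivalent to the compatibility condition (\ref{hji-comp}). Thus it suffices to produce a Courant algebroid whose associated Roytenberg–Weinstein $L_\infty$-algebra has structure maps $l_k + l'_k$, and then invoke the above equivalence together with the hypothesis that $(L(E), l_1, l_2, l_3)$ and $(L(E), l'_1, l'_2, l'_3)$ are themselves $L_\infty$-algebras.

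First, I would unpack the compatibility of the two Courant structures. By the remark following the definition, the conditions (\ref{cou-comp1}) and (\ref{cou-comp2}), together with the shared pairing, are exactly what is needed to upgrade $(E, \langle~,~\rangle, [~,~]_E + [~,~]'_E, \rho + \rho')$ to a bona fide Courant algebroid: the left Leibniz identity is (\ref{cou-comp1}), the anchor property is (\ref{cou-comp2}), and the three remaining axioms (Leibniz in the function argument, symmetric part of the bracket, and invariance of the pairing) are $\mathbf{k}$-linear in $([~,~]_E, \rho)$, so they pass through sums. This produces a third Courant algebroid $(E, \langle~,~\rangle, [~,~]_E + [~,~]'_E, \rho + \rho')$, and by Roytenberg–Weinstein it gives an $L_\infty$-algebra $(L(E), l^*_1, l^*_2, l^*_3)$ on the same graded space $L(E) = C^\infty(M) \oplus \Gamma E$.

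Second, I would verify, term by term using the explicit formulas recalled before the proposition, that $l^*_k = l_k + l'_k$ for $k = 1, 2, 3$. Since $\mathcal{D}$ is the composition $C^\infty(M) \xrightarrow{d} \Omega^1(M) \xrightarrow{\rho^*} \Gamma E^* \simeq \Gamma E$, linearity in $\rho$ yields $\mathcal{D}^* = \mathcal{D} + \mathcal{D}'$. Then $l^*_2$ on two sections is
\begin{equation*}
[X,Y]_E + [X,Y]'_E - \tfrac{1}{2}(\mathcal{D} + \mathcal{D}')\langle X, Y\rangle = l_2(X,Y) + l'_2(X,Y),
\end{equation*}
the mixed-degree component $l^*_2(X,f) = \tfrac{1}{2}\langle X, (\mathcal{D} + \mathcal{D}')f\rangle$ splits identically, and because $l_3$ depends linearly on $\lfloor~,~\rfloor_E$ (which itself is linear in $([~,~]_E, \mathcal{D})$), we likewise get $l^*_3 = l_3 + l'_3$.

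Third, with these identifications the higher Jacobi identities for $(L(E), \{l^*_k\})$ are exactly the higher Jacobi identities for $(L(E), \{l_k + l'_k\})$. Since we already know $(L(E), \{l_k\})$ and $(L(E), \{l'_k\})$ are $L_\infty$-algebras, the remark that (\ref{hji-comp}) is equivalent to $(L, \{l_k + l'_k\}_{k \geq 1})$ being an $L_\infty$-algebra then gives precisely the compatibility condition (\ref{hji-comp}) for the pair $(\{l_k\}, \{l'_k\})$, completing the proof. I do not foresee a genuine obstacle: the main content is really the observation that Roytenberg–Weinstein is additive in $([~,~]_E, \rho)$ when the pairing is kept fixed, which reduces the whole statement to the additivity of the Courant axioms and the built-in characterization of compatibility.
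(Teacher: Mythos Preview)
Your proposal is correct and follows essentially the same approach as the paper: use the sum Courant algebroid $(E,\langle~,~\rangle,[~,~]_E+[~,~]'_E,\rho+\rho')$, note that its Roytenberg--Weinstein $L_\infty$-algebra has structure maps $l_k^*=l_k+l'_k$, and conclude compatibility from the characterization via the sum. The paper simply asserts the additivity $l_k^*=l_k+l'_k$ as ``easy to see'', whereas you spell it out term by term, but the argument is the same.
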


\begin{proof}
Let $(L(E), l_1^*, l_2^*, l_3^*)$ be the $L_\infty$-algebra associated to the Courant algebroid $(E, [~,~]_E + [~,~]_E', \langle ~, ~\rangle, \rho + \rho')$. It is then easy to see that 
\begin{align}\label{comp-cou-id}
l_1^* = l_1 + l'_1, ~~~~ l_2^* = l_2 + l'_2 ~~ \text{ and } ~~ l_3^* = l_3 + l'_3.
\end{align}
It follows from (\ref{comp-cou-id}) that $(L(E), \{ l_1, l_2, l_3 \}, \{ l'_1, l'_2, l'_3 \})$ is a compatible $L_\infty$-algebra.
\end{proof}

\section{Cohomology and deformations of compatible $L_\infty$-algebras}\label{sec5}
In this section, we first introduce the cohomology of a compatible $L_\infty$-algebra. Then we study formal and finite order deformations of a compatible $L_\infty$-algebra in terms of cohomology.

\subsection*{Cohomology of compatible $L_\infty$-algebras}
 Let $(L, \{l_k \}_{k \geq 1}, \{ l_k' \}_{k \geq 1})$ be a compatible $L_\infty$-algebra. In Theorem \ref{comp-inf-gla}, we have seen that this compatible $L_\infty$-algebra structure on $L$ corresponds to a Maurer-Cartan element $(D,D') \in \mathrm{Coder}^1_c (SV)$ in the graded Lie algebra $(\mathrm{Coder}^\bullet_c (SV), \llbracket ~, ~ \rrbracket )$, where $V = s^{-1} L$. We will use this Maurer-Cartan element to define the cohomology of the compatible $L_\infty$-algebra $(L, \{l_k \}_{k \geq 1}, \{ l_k' \}_{k \geq 1})$.
 
% Consider the corresponding Maurer-Cartan element $(D,D') \in \mathrm{Coder}_c^{1} (SV) = \mathrm{Coder}^{1} (SV) \oplus \mathrm{Coder}^{1} (SV)$ in the graded Lie algebra  

For each $n \geq 1$, we define the $n$-th cochain group $C^n_c (L,L)$ by
%Define a cochain complex $\{ C^\bullet_c (L,L), \delta_c \}$, where
\begin{align*}
C^n_c (L, L) := \mathrm{Coder}^{(n-1)}_c (SV) = \underbrace{\mathrm{Coder}^{(n-1)} (SV) \oplus \cdots \oplus \mathrm{Coder}^{(n-1)} (SV)}_{n \text{ times}},
\end{align*}
and a map $\delta_c : C^n_c (L, L) \rightarrow C^{n+1}_c (L, L)$, for $n \geq 1$, defined by $\delta_c ( - ) := \llbracket (D,D'), - \rrbracket.$
Then it is easy to see that $(\delta_c)^2 = 0$. In other words, $\{ C^\bullet_c (L,L), \delta_c \}$ is a cochain complex.
The corresponding cohomology groups are called the cohomology of the compatible $L_\infty$-algebra $L$, and they are denoted by $H^\bullet_c (L,L).$

\subsection*{Deformations of compatible $L_\infty$-algebras}

Let $L = (L, \{l_k \}_{k \geq 1}, \{ l_k' \}_{k \geq 1})$ be a compatible $L_\infty$-algebra with the corresponding Maurer-Cartan element $(D,D') \in \mathrm{Coder}^{1}_c (SV) = \mathrm{Coder}^{1} (SV) \oplus \mathrm{Coder}^{1} (SV)$ in the graded Lie algebra $(\mathrm{Coder}^\bullet_c (SV), \llbracket ~, ~ \rrbracket)$.

\begin{defn}
A formal deformation of the compatible $L_\infty$-algebra $L$ consists of a formal power series
%\begin{align*}
%D_t = D_0 + tD_1 + t^2 D_2 + \cdots ~~~ \text{ and } ~~~ D_t' = D_0' + t D_1' + t^2 D_2' + \cdots ~~ \text{ in } \mathrm{Coder}^{-1}(SV)[[t]]
%\end{align*}
\begin{align}\label{for-def}
(D, D')_t := (D_0, D_0') + t (D_1, D_1')+ t^2 (D_2, D_2') + \cdots \text{ in } \mathrm{Coder}_c^{1}(SV)[[t]]
\end{align}
(with $(D_0, D_0') = (D,D')$) that satisfies $\llbracket (D,D')_t, (D,D')_t \rrbracket = 0$.
\end{defn}

Thus, (\ref{for-def}) defines a formal deformation of $L$ if and only if 
\begin{align}\label{def-eqn}
\sum_{i+j = n} \llbracket (D_i, D_i'), (D_j, D_j') \rrbracket = 0, ~ \text{ for } n \geq 0.
\end{align}
This system of equations are called deformation equations. Note that (\ref{def-eqn}) holds for $n=0$ as $(D,D')$ corresponds to the given compatible $L_\infty$-algebra structure on $L$. However, for $n =1$, we get
\begin{align*}
\llbracket (D,D'), (D_1, D_1') \rrbracket = 0,
\end{align*}
which implies that $(D_1,D'_1) \in C^2_c (L,L)$ is a $2$-cocycle in the cohomology complex of $L$. This is called the infinitesimal of the formal deformation $(D,D')_t.$ In particular, if $(D_1, D_1') = \cdots = (D_{p-1}, D_{p-1}') = 0$ and $(D_p, D_p')$ is the first nonzero term, then $(D_p, D_p') \in C^2_c(L,L)$ is a $2$-cocycle.

\begin{defn}
Let $(D, D')_t = \sum_{i=0}^\infty t^i (D_i, D_i')$ and $\overline{(D,D')}_t = \sum_{i=0}^\infty t^i (\overline{D}_i, \overline{D}_i')$ be two formal deformations of the compatible $L_\infty$-algebra $L$. They are said to be equivalent if there is a formal power series
\begin{align*}
\Phi_t := \Phi_0 + t \Phi_1 + t^2 \Phi_2 + \cdots \text{ in } \mathrm{Coder}^0_c (SV)[[t]] = \mathrm{Coder}^0(SV) [[t]]
\end{align*}
(with $\Phi_0 = \mathrm{id}$) satisfying $\overline{(D, D')}_t \circ \Phi_t = \Phi_t \circ (D,D')_t.$
\end{defn}

It follows that $(D, D')_t$ and $\overline{(D, D')}_t$ are equivalent if and only if
\begin{align}\label{equiv-e}
\sum_{i+j = n}  (\overline{D}_i, \overline{D}_i') \circ \Phi_j = \sum_{i+j = n} \Phi_i \circ (D_j, D_j') , \text{ for } n \geq 0.
\end{align}
Observe that (\ref{equiv-e}) holds automatically for $n=0$ as $\Phi_0 = \mathrm{id}$. For $n = 1$, we get $(D_1, D_1') - (D_1, D_1') = \delta_c (\Phi_1)$. This shows that the infinitesimals corresponding to equivalent deformations are cohomologous. Hence, they gives rise to the same cohomology class in $H^2_c (L,L)$.

Next, we consider rigidity of compatible $L_\infty$-algebras.
\begin{defn}
(i) A deformation $(D,D')_t$ of a compatible $L_\infty$-algebra $L$ is said to be trivial if it is equivalent to the undeformed one $(D,D')$.

(ii) A compatible $L_\infty$-algebra $L$ is said to be rigid if any deformation of $L$ is trivial.
\end{defn}

The following result provides a sufficient condition for the rigidity of a compatible $L_\infty$-algebra.

\begin{thm}\label{2-rigid}
Let $L$ be a compatible $L_\infty$-algebra. If $H^2_c (L,L) = 0$ then $L$ is rigid.
\end{thm}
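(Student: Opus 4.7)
The plan is to adapt the standard Maurer--Cartan gauge argument in a complete graded Lie algebra to the compatible setting: I will take an arbitrary formal deformation $(D,D')_t = \sum_{i \geq 0} t^i (D_i, D'_i)$ with $(D_0, D'_0) = (D,D')$ and show it is equivalent to the trivial deformation by killing its lowest nonzero perturbation order by order. First, assume $(D,D')_t$ is nontrivial and let $p \geq 1$ be the smallest index with $(D_p, D'_p) \neq 0$. The deformation equation $\sum_{i+j=p} \llbracket (D_i, D'_i), (D_j, D'_j) \rrbracket = 0$ then reduces, since $(D_i, D'_i) = 0$ for $0 < i < p$, to $\llbracket (D,D'), (D_p, D'_p) \rrbracket = 0$. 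Thus $(D_p, D'_p) \in C^2_c(L,L)$ is a $2$-cocycle for $\delta_c$.

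Invoking the hypothesis $H^2_c(L,L) = 0$, I can write $(D_p, D'_p) = \delta_c(\Phi_p)$ for some $\Phi_p \in C^1_c(L,L) = \mathrm{Coder}^0(SV)$; explicitly, $D_p = [D, \Phi_p]_C$ and $D'_p = [D', \Phi_p]_C$. Next I set $\Phi_t^{(1)} := \mathrm{id} + t^p \Phi_p$, a unit in $\mathrm{End}(SV)[[t]]$ with inverse $\mathrm{id} - t^p \Phi_p + O(t^{2p})$, and form the equivalent deformation $\overline{(D,D')}_t := \Phi_t^{(1)} \circ (D,D')_t \circ (\Phi_t^{(1)})^{-1}$. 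A routine expansion in $t$ shows that the coefficients at $t^0, t^1, \ldots, t^{p-1}$ are unchanged (they were either $(D,D')$ or zero), while the coefficient at $t^p$ becomes $(D_p, D'_p) - \llbracket (D,D'), \Phi_p \rrbracket = 0$. Thus the first nonzero perturbation has been pushed to order $\geq p+1$.

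Iterating yields a strictly increasing sequence of orders $p_1 < p_2 < \cdots$ and transformations $\Phi_t^{(k)} = \mathrm{id} + t^{p_k} \Phi_{p_k}$. The candidate full equivalence is the infinite composition $\Phi_t := \cdots \circ \Phi_t^{(2)} \circ \Phi_t^{(1)}$. The main obstacle is making this iteration rigorous: one must verify that the infinite product converges. This is where the $t$-adic structure is used --- because $\Phi_t^{(k)} \equiv \mathrm{id} \pmod{t^{p_k}}$ with $p_k \to \infty$, the partial products form a Cauchy sequence in the complete topological ring $\mathrm{End}(SV)[[t]]$ and converge to a well-defined $\Phi_t \in \mathrm{Coder}^0(SV)[[t]]$ with constant term $\mathrm{id}$. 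By construction, $\Phi_t$ conjugates $(D,D')_t$ into $(D,D')$, proving that every formal deformation is trivial and hence that $L$ is rigid.
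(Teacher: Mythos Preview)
Your proof is correct and follows essentially the same approach as the paper: both identify the lowest-order perturbation $(D_p,D'_p)$ as a $2$-cocycle, use $H^2_c(L,L)=0$ to write it as $\delta_c(\Phi_p)$, conjugate by $\mathrm{id}+t^p\Phi_p$ to push the first nonzero term to higher order, and iterate. Your version is in fact slightly more explicit than the paper's, since you spell out the $t$-adic convergence of the infinite composition of equivalences, which the paper leaves to the phrase ``by repeating the same process.''
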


\begin{proof}
Let $(D, D')_t$ be any deformation of $L$ given by
\begin{align*}
(D,D')_t = (D, D') + \sum_{i \geq p} t^i (D_i, D_i'), ~ \text{ for some } p \geq 1.
\end{align*}
Then we know that the first nonzero term $(D_p, D_p')$ is a $2$-cocycle. By the hypothesis, we have $(D_p, D_p') = \delta_c (\Phi_p)$, for some $\Phi_p \in C^1_c (L,L) = \mathrm{Coder}^0 (SV)$. We consider $\Phi_t = \mathrm{id} + t^p \Phi_p \in \mathrm{Coder}^0 (SV)[[t]]$, and define
%\begin{align*}
$\overline{(D, D')}_t = \Phi_t^{-1} \circ (D, D')_t \circ \Phi_t.$
%\end{align*}
Then $\overline{(D, D')}_t$ is a deformation equivalent to $(D,D')_t$, and of the form
\begin{align*}
\overline{(D, D')}_t = (D, D') + \sum_{i \geq p+1} t^i (\overline{D}_i, \overline{D}_i').
\end{align*}
By repeating the same process, we can show that $(D, D')_t$ is equivalent to $(D,D')$. Hence the result follows.
\end{proof}

\medskip

\noindent {\bf Finite order deformations.} Here we consider finite order deformations of a compatible $L_\infty$-algebra and their extensions. Let $L=(L, \{ l_k \}_{k \geq 1}, \{ l_k' \}_{k \geq 1})$ be a compatible $L_\infty$-algebra given by a Maurer-Cartan elements $(D,D') \in \mathrm{Coder}^{1}_c (SV)$ in the graded Lie algebra $(\mathrm{Coder}^\bullet_c (SV), \llbracket ~, ~ \rrbracket)$.

\begin{defn}
Let $N \in \mathbb{N}$ be a natural number. An order $N$ deformation of $L$ consists of a finite polynomial
\begin{align*}
(D,D')_t = (D_0,D_0') + t (D_1, D_1') + \cdots + t^N (D_N, D_N') \text{ in } \text{ Coder}^{1}_c (SV)[[t]]/(t^{N+1})
\end{align*}
(with $(D_0,D_0') = (D,D')$) that satisfies $\llbracket (D,D')_t, (D,D')_t \rrbracket = 0$ (\text{mod }$t^{N+1}$). 
\end{defn}

Thus, in an order $N$ deformation, the equations in (\ref{def-eqn}) hold for $n=0,1,\ldots,N$. These equations can be equivalently written as
\begin{align}\label{fin-def-eq}
\llbracket (D,D'), (D_n, D_n') \rrbracket = - \frac{1}{2} \sum_{i+j = n; i, j \geq 1} \llbracket (D_i, D_i'), (D_j, D_j') \rrbracket, ~ \text{ for } n = 0,1, \ldots, N.
\end{align}

\begin{defn}
An order $N$ deformation $(D,D')_t = \sum_{i=0}^N t^i (D_i, D_i')$ is said to be extensible if there exists an element $(D_{N+1}, D_{N+1}') \in \mathrm{Coder}^{1}_c (SV)$ such that $\widetilde{(D,D')}_t = (D,D')_t + t^{N+1} (D_{N+1}, D_{N+1}') = \sum_{i=0}^{N+1} t^i (D_i, D_i')$ is an order $N+1$ deformation.
\end{defn}

Thus, together with equations in (\ref{fin-def-eq}), one more equation need to be satisfied, namely,
\begin{align}\label{fin-def-nn}
\llbracket (D,D'), (D_{N+1}, D_{N+1}') \rrbracket = - \frac{1}{2} \sum_{i+j = N+1; i, j \geq 1} \llbracket (D_i, D_i'), (D_j, D_j') \rrbracket.
\end{align}
The right hand side of the above equation does not contain the term $(D_{N+1}, D_{N+1}')$. Therefore, it depends only on the given orden $N$ deformation $(D,D')_t$. It is called the obstruction to extend the deformation $(D,D')_t$.

\begin{prop}
The obstruction is a $3$-cocycle in the cohomology complex of $L$.
\end{prop}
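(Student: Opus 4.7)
The plan is to verify directly that $\delta_c(\mathrm{Ob}) = \llbracket (D, D'), \mathrm{Ob}\rrbracket = 0$, where
$$\mathrm{Ob} := -\tfrac{1}{2}\!\!\sum_{\substack{i+j = N+1 \\ i, j \geq 1}}\!\!\llbracket (D_i, D_i'), (D_j, D_j') \rrbracket \in \mathrm{Coder}^2_c(SV).$$
Since $(D,D')$ and every $(D_k, D_k')$ live in degree one of the graded Lie algebra $(\mathrm{Coder}^\bullet_c(SV), \llbracket~,~\rrbracket)$, the Koszul signs coming from graded antisymmetry and the graded Jacobi identity uniformly collapse to $-1$; in particular $\llbracket (D_i,D_i'),(D_j,D_j')\rrbracket$ is symmetric in $i$ and $j$, which I will use freely.

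The first step is to move $\llbracket (D,D'), -\rrbracket$ inside each double bracket of $\mathrm{Ob}$ using graded Jacobi. A short computation with degree-one inputs gives
$$\llbracket (D,D'), \llbracket (D_i,D_i'),(D_j,D_j')\rrbracket \rrbracket = \llbracket \llbracket (D,D'),(D_i,D_i')\rrbracket, (D_j,D_j')\rrbracket + \llbracket \llbracket (D,D'),(D_j,D_j')\rrbracket, (D_i,D_i')\rrbracket,$$
after using graded symmetry to rewrite the second term. Averaging over the symmetric pair $(i,j)$ therefore yields
$$\llbracket (D,D'), \mathrm{Ob}\rrbracket = -\!\!\sum_{\substack{i+j = N+1 \\ i,j \geq 1}}\!\!\llbracket \llbracket (D,D'),(D_i,D_i')\rrbracket, (D_j,D_j')\rrbracket.$$

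The second step is to substitute the order-$k$ deformation equation (\ref{fin-def-eq}), which holds for every $1 \leq k \leq N$ by hypothesis on $(D,D')_t$. This replaces each inner expression $\llbracket (D,D'),(D_i,D_i')\rrbracket$ (note $i \leq N$) by $-\tfrac{1}{2}\sum_{p+q=i,\,p,q\geq 1}\llbracket(D_p,D_p'),(D_q,D_q')\rrbracket$, turning $\llbracket (D,D'),\mathrm{Ob}\rrbracket$ into the triple sum
$$\tfrac{1}{2}\!\!\sum_{\substack{p+q+r=N+1 \\ p,q,r \geq 1}}\!\!\llbracket \llbracket (D_p,D_p'),(D_q,D_q')\rrbracket, (D_r,D_r')\rrbracket.$$

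The third and final step is to see that this triple sum vanishes. For any three degree-one elements $a,b,c$, the graded Jacobi identity reduces to the untwisted cyclic relation
$$\llbracket \llbracket a,b\rrbracket,c\rrbracket + \llbracket \llbracket b,c\rrbracket,a\rrbracket + \llbracket \llbracket c,a\rrbracket,b\rrbracket = 0.$$
The summation range $\{(p,q,r) : p+q+r = N+1,\ p,q,r \geq 1\}$ is invariant under cyclic (and symmetric) permutations, and the symmetry of the inner bracket lets me group the terms into such cyclic triples, which then annihilate by the identity above. I expect the only real difficulty to be the bookkeeping in these first two steps, namely confirming that the factors of $\tfrac{1}{2}$, the re-indexing after symmetrizing $i \leftrightarrow j$, and the signs from graded Jacobi combine exactly as claimed; once that arithmetic is in place, the vanishing of the cyclic sum finishes the proof, showing $\delta_c(\mathrm{Ob}) = 0$ and hence that $\mathrm{Ob} \in C^3_c(L,L)$ is a $3$-cocycle.
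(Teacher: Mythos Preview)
Your argument is correct and follows essentially the same route as the paper: apply the graded Jacobi identity to push $\llbracket (D,D'),-\rrbracket$ inside each double bracket, substitute the deformation equations (\ref{fin-def-eq}), and observe that the resulting symmetric triple sum $\tfrac{1}{2}\sum_{p+q+r=N+1}\llbracket \llbracket (D_p,D_p'),(D_q,D_q')\rrbracket,(D_r,D_r')\rrbracket$ vanishes by the cyclic Jacobi identity for degree-one elements. The only cosmetic difference is that you exploit the $i\leftrightarrow j$ symmetry immediately to collapse to a single sum, whereas the paper carries two parallel sums and combines them at the end; the arithmetic checks out either way.
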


\begin{proof}
We have
\begin{align*}
&\delta_c \big( - \frac{1}{2} \sum_{i+j = N+1; i, j \geq 1} \llbracket (D_i, D_i'), (D_j, D_j') \rrbracket  \big) \\
&= - \frac{1}{2} \sum_{i+j = N+1; i, j \geq 1}  \llbracket (D,D'), \llbracket   (D_{i}, D'_{i}), (D_{j}, D'_{j}) \rrbracket  \rrbracket \\
&= - \frac{1}{2} \sum_{i+j = N+1;i, j \geq 1}  \big(      \llbracket \llbracket  (D, D'), (D_{i}, D'_{i}) \rrbracket, (D_{j}, D'_{j}) \rrbracket  ~-~ \llbracket (D_{i}, D'_{i}), \llbracket (D, D'), (D_{j}, D'_{j}) \rrbracket \rrbracket \big) \\
&= \frac{1}{4} \sum_{i_1 + i_2 + j = N+1;i_1, i_2, j \geq 1} \llbracket \llbracket (D_{i_1}, D'_{i_1}), (D_{i_2}, D'_{i_2}) \rrbracket, (D_{j}, D'_{j}) \rrbracket  \\
& \qquad - \frac{1}{4} \sum_{i+j_1 + j_2 = N+1; i, j_1, j_2 \geq 1} \llbracket (D_{i}, D'_{i}), \llbracket (D_{j_1}, D'_{j_1}), (D_{j_2}, D'_{j_2}) \rrbracket \rrbracket    \qquad (\text{form } (\ref{fin-def-eq})) \\
&= \frac{1}{2} \sum_{i+j+k = N+1; i, j, k \geq 1} \llbracket \llbracket (D_{i}, D'_{i}), (D_{j}, D'_{j}) \rrbracket , (D_{k}, D'_{k}) \rrbracket  = 0.
\end{align*}
Hence the proof.
\end{proof}

Therefore, it follows that the obstruction induces a cohomology class in $H^3_c (L,L)$. This is called the obstruction class. As a consequence of (\ref{fin-def-nn}), we get the following.

\begin{thm}\label{3-ext}
A finite order deformation of a compatible $L_\infty$-algebra $L$ is extensible if and only if the corresponding class in $H^3_c (L,L)$ is trivial.
\end{thm}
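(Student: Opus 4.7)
The plan is to recognize that Theorem \ref{3-ext} is essentially a direct translation between the extensibility of an order $N$ deformation and the cohomological condition that the obstruction class vanishes in $H^3_c(L,L)$. The preceding proposition has already done the harder piece of work, namely showing that the obstruction
\[
\mathrm{Ob} := -\tfrac{1}{2} \sum_{i+j = N+1;\, i, j \geq 1} \llbracket (D_i, D'_i), (D_j, D'_j) \rrbracket
\]
is a $3$-cocycle, so all that remains is to extract the content of equation (\ref{fin-def-nn}) using $\delta_c(-) = \llbracket (D,D'), - \rrbracket$.

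First I will argue the forward direction. Assuming the order $N$ deformation $(D,D')_t = \sum_{i=0}^N t^i (D_i, D'_i)$ extends to an order $N+1$ deformation, the extending term $(D_{N+1}, D'_{N+1}) \in \mathrm{Coder}^1_c(SV) = C^2_c(L,L)$ by definition satisfies (\ref{fin-def-nn}), which rewrites as $\delta_c(D_{N+1}, D'_{N+1}) = \mathrm{Ob}$. This exhibits $\mathrm{Ob}$ as a coboundary, so its class in $H^3_c(L,L)$ is zero.

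For the converse, assuming $[\mathrm{Ob}] = 0$ in $H^3_c(L,L)$, the fact that $\mathrm{Ob}$ is a cocycle (from the previous proposition) combined with the vanishing of its class furnishes an element $(D_{N+1}, D'_{N+1}) \in C^2_c(L,L)$ with $\delta_c(D_{N+1}, D'_{N+1}) = \mathrm{Ob}$; that is, equation (\ref{fin-def-nn}) holds by construction. I would then set $\widetilde{(D,D')}_t := (D,D')_t + t^{N+1}(D_{N+1}, D'_{N+1})$ and compute $\llbracket \widetilde{(D,D')}_t, \widetilde{(D,D')}_t \rrbracket$ modulo $t^{N+2}$, noting that equations (\ref{fin-def-eq}) hold up to order $N$ by hypothesis and the only new coefficient, that of $t^{N+1}$, is precisely $2 \llbracket (D,D'), (D_{N+1}, D'_{N+1}) \rrbracket - 2\mathrm{Ob} = 0$ by our choice of $(D_{N+1}, D'_{N+1})$. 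The higher cross-term $t^{2(N+1)} \llbracket (D_{N+1}, D'_{N+1}), (D_{N+1}, D'_{N+1}) \rrbracket$ vanishes mod $t^{N+2}$ automatically, so $\widetilde{(D,D')}_t$ is the desired order $N+1$ extension.

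There is essentially no substantive obstacle: once one identifies $\delta_c$ with $\llbracket (D,D'), - \rrbracket$ and uses that $\mathrm{Ob}$ is known to be a cocycle, the statement reduces to unwinding definitions. The only routine bookkeeping is to verify the degree count, which works because each $(D_i, D'_i)$ sits in $\mathrm{Coder}^1_c(SV)$, making the bracket $\llbracket (D_i, D'_i), (D_j, D'_j) \rrbracket$ land in $\mathrm{Coder}^2_c(SV) = C^3_c(L,L)$ as required.
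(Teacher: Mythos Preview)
Your proposal is correct and matches the paper's approach exactly: the paper gives no separate proof of this theorem, simply stating it ``as a consequence of (\ref{fin-def-nn}),'' and your argument is precisely the unpacking of that consequence---identifying $\delta_c$ with $\llbracket (D,D'),-\rrbracket$ so that (\ref{fin-def-nn}) reads $\delta_c(D_{N+1},D'_{N+1})=\mathrm{Ob}$, which is equivalent to the obstruction class vanishing.
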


\begin{corollary}
Let $L$ be a compatible $L_\infty$-algebra. If $H^3_c (L,L) = 0$ then any finite order deformation of $L$ is extensible.
\end{corollary}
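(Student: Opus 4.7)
The plan is to derive this statement as an immediate consequence of Theorem \ref{3-ext}. Given any order $N$ deformation $(D,D')_t = \sum_{i=0}^{N} t^i (D_i, D_i')$ of $L$, the preceding proposition has already established that the element
$$\mathrm{Ob}\bigl((D,D')_t\bigr) := -\tfrac{1}{2} \sum_{\substack{i+j = N+1 \\ i,j \geq 1}} \llbracket (D_i,D_i'), (D_j,D_j') \rrbracket$$
is a $3$-cocycle in the complex $C^\bullet_c(L,L)$, so it determines a well-defined cohomology class in $H^3_c(L,L)$.

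The hypothesis $H^3_c(L,L) = 0$ forces this class to vanish. By the characterisation given in Theorem \ref{3-ext}, vanishing of the obstruction class is equivalent to the existence of an element $(D_{N+1}, D_{N+1}') \in \mathrm{Coder}^1_c(SV)$ satisfying equation (\ref{fin-def-nn}), and for such an element the polynomial $(D,D')_t + t^{N+1}(D_{N+1}, D_{N+1}')$ is by construction an order $N+1$ deformation extending $(D,D')_t$. Since $(D,D')_t$ was an arbitrary finite order deformation, every finite order deformation is extensible.

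There is essentially no main obstacle here: the two non-trivial ingredients, namely that the obstruction expression is a $3$-cocycle and that its cohomology class is the precise obstruction to extending a deformation, have already been handled in the preceding proposition and in Theorem \ref{3-ext}. The corollary is simply the specialisation of the ``if'' direction of Theorem \ref{3-ext} to the situation where the ambient cohomology group vanishes identically, so every $3$-cocycle is forced to be a coboundary and the inductive extension step can always be carried out.
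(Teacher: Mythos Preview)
Your proposal is correct and matches the paper's approach: the paper states this corollary without proof, treating it as an immediate consequence of Theorem \ref{3-ext}, which is exactly what you do. Your write-up simply makes explicit the one-line reasoning the paper leaves implicit.
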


\section{Classification of some compatible $L_\infty$-algebras}\label{sec6}

In this section, we will consider compatible $L_\infty$-algebras whose underlying graded vector space is concentrated in degrees $-1$ and $0$. We call them $2$-term compatible $L_\infty$-algebras. We first recall $2$-term $L_\infty$-algebras \cite{baez-crans}.

\begin{defn}
(i) A $2$-term $L_\infty$-algebra is a tuple $(L_{-1} \oplus L_0, l_1, l_2, l_3)$ consisting of a graded vector space $L_{-1} \oplus L_0$ (concentrated in degrees $-1$ and $0$) together with a linear map $l_1 : L_{-1} \rightarrow L_0$, skew-symmetric bilinear maps $l_2 : L_i \times L_j \rightarrow L_{i+j}$ ($-1 \leq i, j , i+j \leq 0$) and a skew-symmetric trilinear map $l_3 : L_0 \times L_0 \times L_0 \rightarrow L_{-1}$ satisfying for $w, x, y, z \in L_0$ and $h, k \in L_{-1},$
\begin{align}
& l_1 (l_2 (x, h)) = l_2 (x, l_1 (h)),\\
& l_2 (l_1 (h), k) = l_2 (h, l_1(k)),\\
& l_1 (l_3 (x,y,z)) = - l_2 ( l_2(x,y), z) + l_2 (l_2 (x,z), y) + l_2 (x, l_2 (y,z)), \label{sk-lie}\\
& l_3 (l_1 (h), x, y) = - l_2 ( l_2(x,y), h) + l_2 (l_2 (x,h), y) + l_2 (x, l_2 (y,h)), \label{sk2-lie}\\
&l_2 (l_3 (w,x, y), z) + l_2 ( l_3 (w,y,z), x) - l_2 (l_3 (w,x,z), y) - l_2 (l_3 (x,y,z), w) 
= l_3 (l_2 (w,x ),y,z ) \label{sk3-lie} \\&- l_3 (l_2 (w,y), x, z) + l_3 (l_2 (w,z), x, y) + l_3 (l_2 (x, y), w, z) - l_3 (l_2 (x,z), w, y) + l_3 (l_2 (y,z), w, x). \nonumber
\end{align}

(ii) Let $(L_{-1} \oplus L_0, l_1, l_2, l_3)$ and $(M_{-1} \oplus M_0, m_1, m_2, m_3)$  be $2$-term $L_\infty$-algebras. A morphism between them is a triple $(f_{-1}, f_0, f)$ consisting of linear maps $f_{-1} : L_{-1} \rightarrow M_{-1}$ and $f_0 : L_0 \rightarrow M_0$, and a skew-symmetric bilinear map $f : L_0 \times L_0 \rightarrow M_{-1}$ satisfying for $x,y,z \in L_0$ and $h \in L_{-1}$,
\begin{align}
&f_0 \circ l_1 = m_1 \circ f_{-1},\\
& m_1 (f (x,y)) = f_0 (l_2 (x,y)) - m_2 (f_0(x), f_0(y)),\\
& f (x, l_1 (h)) = f_{-1} (l_2 (x, h)) - m_2 (f_0 (x), f_{-1}(h)),\\
& m_2 (f(x,y), f_0 (z)) + f (l_2(x,y), z) +  f_{-1} (l_3 (x,y,z)) = m_3 (f_0 (x), f_0 (y), f_0 (z)) \\
&+m_2 (f_0 (x), f (y,z)) + m_2 (f(x,z), f_0 (y)) + f (x, l_2 (y, z)) + f (l_2 (x,z), y)). \nonumber
\end{align}
\end{defn}

%We will now define $2$-term compatible $L_\infty$-algebras.

\begin{defn}
A $2$-term compatible $L_\infty$-algebra is a tuple  $(L_{-1} \oplus L_0, \{ l_1, l_2, l_3 \}, \{ l'_1 , l'_2, l'_3 \})$ in which $(L_{-1} \oplus L_0, l_1, l_2, l_3)$ and $(L_{-1} \oplus L_0, l'_1, l'_2, l'_3)$ are $2$-term $L_\infty$-algebras satisfying for $w, x, y, z \in L_0$ and $h, k \in L_{-1},$
\begin{align}
& l_1 (l'_2 (x, h)) + l'_1 (l_2 (x, h)) = l_2 (x, l'_1 (h)) + l'_2 (x, l_1 (h)), \label{comp-2-first}\\
& l_2 (l'_1 (h), k) + l'_2 (l_1 (h), k) = l_2 (h, l'_1(k)) + l'_2 (h, l_1(k)),\\
& l_1 (l'_3 (x,y,z)) + l'_1 (l_3 (x,y,z)) =- l_2 ( l'_2(x,y), z) - l'_2 ( l_2(x,y), z) + l_2 (l'_2 (x,z), y) + l'_2 (l_2 (x,z), y) \label{comp-2-id}\\
&\qquad \qquad \qquad \qquad \qquad \qquad \qquad + l_2 (x, l'_2 (y,z)) + l'_2 (x, l_2 (y,z)), \nonumber \\
& l_3 (l'_1 (h), x, y) + l'_3 (l_1 (h), x, y) = - l_2 ( l'_2(x,y), h) - l'_2 ( l_2(x,y), h) + l_2 (l'_2 (x,h), y) + l'_2 (l_2 (x,h), y) \label{comp-2-idd}\\
& \qquad \qquad \qquad \qquad \qquad \qquad \qquad + l_2 (x, l'_2 (y,h)) + l'_2 (x, l_2 (y,h)), \nonumber \\
& l_2 (l'_3 (w,x, y), z) + l'_2 (l_3 (w,x, y), z) + l_2 ( l'_3 (w,y,z), x) + l'_2 ( l_3 (w,y,z), x) - l_2 (l'_3 (w,x,z), y) \label{comp-2-iddd}\\
&- l'_2 (l_3 (w,x,z), y) - l_2 (l'_3 (x,y,z), w) - l'_2 (l_3 (x,y,z), w) = l_3 (l'_2 (w,x ),y,z ) + l'_3 (l_2 (w,x ),y,z ) \nonumber \\
&- l_3 (l'_2 (w,y), x, z) - l'_3 (l_2 (w,y), x, z) + l_3 (l'_2 (w,z), x, y) + l'_3 (l_2 (w,z), x, y) + l_3 (l'_2 (x, y), w, z) \nonumber \\
&+ l'_3 (l_2 (x, y), w, z) - l_3 (l'_2 (x,z), w, y) - l'_3 (l_2 (x,z), w, y) + l_3 (l'_2 (y,z), w, x) + l'_3 (l_2 (y,z), w, x). \nonumber
\end{align}
\end{defn}

\begin{remark}
Let $(L_{-1} \oplus L_0, \{ l_1, l_2, l_3 \}, \{ l'_1 , l'_2, l'_3 \})$ be a $2$-term compatible $L_\infty$-algebra with $l_1 = l'_1 = l.$ Then the compatibility conditions (\ref{comp-2-first}) - (\ref{comp-2-iddd}) is equivalent to the fact that $(L_{-1} \oplus L_0, l, l_2 + l'_2, 2(l_3+ l'_3))$ is a $2$-term $L_\infty$-algebra.
\end{remark}

\begin{defn}
Let $(L_{-1} \oplus L_0, \{ l_1, l_2, l_3 \}, \{ l'_1 , l'_2, l'_3 \})$ and $(M_{-1} \oplus M_0, \{ m_1, m_2, m_3 \}, \{ m'_1 , m'_2, m'_3 \})$ be $2$-term compatible $L_\infty$-algebras. A morphism between them is a triple $(f_{-1}, f_0, f)$ which is an $L_\infty$-algebra morphism from $(L_{-1} \oplus L_0, l_1, l_2, l_3 )$ to $(M_{-1} \oplus M_0,  m_1, m_2, m_3)$ and an $L_\infty$-algebra morphism from $(L_{-1} \oplus L_0, l'_1, l'_2, l'_3 )$ to $(M_{-1} \oplus M_0,  m'_1, m'_2, m'_3)$.
\end{defn}

Let $(f_{-1}, f_0, f) : (L_{-1} \oplus L_0, \{ l_1, l_2, l_3 \}, \{ l'_1 , l'_2, l'_3 \}) \rightsquigarrow (M_{-1} \oplus M_0, \{ m_1, m_2, m_3 \}, \{ m'_1 , m'_2, m'_3 \})$ and $(g_{-1}, g_0, g) : (M_{-1} \oplus M_0, \{ m_1, m_2, m_3 \}, \{ m'_1 , m'_2, m'_3 \}) \rightsquigarrow (N_{-1} \oplus N_0, \{ n_1, n_2, n_3 \}, \{ n'_1 , n'_2, n'_3 \})$ be two morphisms of compatible $L_\infty$-algebras. Their composition is defined by the triple $(g_{-1} \circ f_{-1},~ g_0 \circ f_0,~ g \circ (f_0 \times f_0) + g_{-1} \circ f)$.
%Compositions of two morphisms are defined as compositions of $L_\infty$-algebra morphisms \cite{baez-crans}. 

\begin{notation}\label{notat}
We denote by ${\bf 2CompL_\infty}$ the category of $2$-term compatible $L_\infty$-algebras and morphisms between them. Let ${\bf 2CompL^1_\infty}$ be the subcategory of ${\bf 2CompL_\infty}$ whose objects are $2$-term compatible $L_\infty$-algebras of the form $(L_{-1} \oplus L_0, \{ l_1, l_2, l_3 \}, \{ l'_1 , l'_2, l'_3 \})$ with $l_1=l'_1.$
\end{notation}

The following definitions are motivated by the standard $L_\infty$ case \cite{baez-crans}.

\begin{defn}
A $2$-term compatible $L_\infty$-algebra  $(L_{-1} \oplus L_0, \{ l_1, l_2, l_3 \}, \{ l'_1 , l'_2, l'_3 \})$ is said to be
\begin{itemize}
\item[(i)] `strict' if $l_3=l'_3 = 0$,
\item[(ii)] `skeletal' if $l_1 = l'_1 = 0$.
\end{itemize}
\end{defn}

In the rest of this section, we will be interested in strict and skeletal compatible $L_\infty$-algebras and their classifications.

\subsection*{Crossed modules of compatible Lie algebras and strict compatible $L_\infty$-algebras}

In this subsection, we introduce crossed module of compatible Lie algebras and show that they are closely related to strict compatible $L_\infty$-algebras.

\begin{defn}
A crossed module of compatible Lie algebras is tuple $(\mathfrak{g}, \mathfrak{h}, t, t', \alpha, \alpha')$ consisting of compatible Lie algebras $\mathfrak{g} = (\mathfrak{g}, [~,~], [~,~]')$ and $\mathfrak{h} = (\mathfrak{h}, [~,~]_\mathfrak{h}, [~,~]'_\mathfrak{h})$, homomorphisms $t : (\mathfrak{g}, [~,~]) \rightarrow (\mathfrak{h}, [~,~]_\mathfrak{h})$ and $t' : (\mathfrak{g}, [~,~]') \rightarrow (\mathfrak{h}, [~,~]'_\mathfrak{h})$ of Lie algebras, and bilinear maps $ \alpha, \alpha' : \mathfrak{h} \times \mathfrak{g} \rightarrow \mathfrak{g}$ satisfying for $m, n \in \mathfrak{g}$ and $x, y \in \mathfrak{h}$,
\begin{itemize}
\item[(i)] $t (\alpha (x, m)) = [x, t (m)]_\mathfrak{h}$,\\
$t' (\alpha' (x, m)) = [x, t' (m)]'_\mathfrak{h}$,\\
$ t (\alpha' (x,m) ) + t' ( \alpha (x,m)) = [x, t'(m)]_\mathfrak{h} + [ x, t (m)]'_\mathfrak{h},$
\item[(ii)] $\alpha (t(m), n) = [m, n]$,\\
$\alpha' (t'(m), n) = [m, n]'$,\\
$\alpha (t'(m), n) + \alpha' (t(m), n) = \alpha (m, t'(n)) + \alpha' (m, t(n))$,
\item[(iii)] $\alpha (x, [m,n]) = [\alpha (x, m), n] + [m, \alpha (x,n)],$\\
$\alpha' (x, [m,n]') = [\alpha' (x, m), n]' + [m, \alpha' (x,n)]',$\\
$\alpha (x, [m,n]') + \alpha' (x, [m,n]) = [\alpha (x, m), n]' + [\alpha' (x, m), n] + [m, \alpha (x,n)]' + [m, \alpha' (x,n)],$
\item[(iv)] $\alpha ([x,y], m) = \alpha (x, \alpha (y, m)) - \alpha (y, \alpha (x, m)),$ \\
$\alpha' ([x,y]', m) = \alpha' (x, \alpha' (y, m)) - \alpha' (y, \alpha' (x, m)),$\\
$\alpha ([x,y]', m) + \alpha' ([x,y], m) = \alpha (x, \alpha' (y, m)) + \alpha' (x, \alpha (y, m)) - \alpha (y, \alpha' (x, m)) - \alpha' (y, \alpha (x, m))$.
\end{itemize}
\end{defn}

\begin{thm}\label{strict-thm}
There is a one-to-one correspondence between strict compatible $L_\infty$-algebras and crossed module of compatible Lie algebras.
\end{thm}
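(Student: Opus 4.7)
The plan is to define explicit inverse maps between the two classes of objects at the level of data and then verify that the defining axioms correspond under these maps. Given a strict compatible $L_\infty$-algebra $(L_{-1}\oplus L_0,\{l_1,l_2,0\},\{l'_1,l'_2,0\})$, I set $\mathfrak{g}:=L_{-1}$, $\mathfrak{h}:=L_0$, $t:=l_1$, $t':=l'_1$, and define the brackets and actions by
\begin{align*}
[x,y]_\mathfrak{h}&:=l_2(x,y), & [x,y]'_\mathfrak{h}&:=l'_2(x,y), \\
\alpha(x,m)&:=l_2(x,m), & \alpha'(x,m)&:=l'_2(x,m), \\
[m,n]&:=l_2(l_1(m),n), & [m,n]'&:=l'_2(l'_1(m),n),
\end{align*}
for $x,y\in L_0$ and $m,n\in L_{-1}$. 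Conversely, a crossed module of compatible Lie algebras $(\mathfrak{g},\mathfrak{h},t,t',\alpha,\alpha')$ determines a strict compatible $L_\infty$-algebra by reading these formulas backwards, extending $l_2,l'_2$ graded-skew-symmetrically to $L_0\otimes L_{-1}$, and setting $l_3=l'_3=0$.

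For each separate structure (unprimed or primed alone), the bijection between strict $2$-term $L_\infty$-algebras and crossed modules of Lie algebras is the classical result of Baez--Crans, whose verification goes through unchanged in our setting. What is genuinely new is the matching of compatibility conditions. A direct substitution shows: the compatibility of the two brackets on $\mathfrak{h}$ is precisely (\ref{comp-2-id}) with $l_3=l'_3=0$; the mixed equivariance (third equation of (i) in the crossed module definition) is exactly (\ref{comp-2-first}) evaluated on $x\in L_0$, $h\in L_{-1}$; the mixed Peiffer identity (third equation of (ii)) is the second compatibility identity in the definition of a $2$-term compatible $L_\infty$-algebra, rewritten using graded skew-symmetry of $l_2,l'_2$; and the mixed Lie action axiom (third equation of (iv)) is (\ref{comp-2-idd}) with $l_3=l'_3=0$. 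In the strict case (\ref{comp-2-iddd}) is trivial, so every $L_\infty$-side compatibility identity is accounted for.

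The main obstacle will be establishing the two remaining crossed-module axioms, namely the compatibility of the two induced brackets on $\mathfrak{g}$ (third equation of (iii) for the $\mathfrak{g}$-level) and the mixed derivation property (third equation of (iii) for the action), since neither is directly paired with a single $L_\infty$-compatibility identity. My plan is to derive them algebraically: the mixed derivation property should follow by applying (\ref{comp-2-idd}) with $h$ replaced by $l_1(m)$ or $l'_1(m)$ and then using the identities $l_2(x,l_1(h))=l_1(l_2(x,h))$ and $l'_2(x,l'_1(h))=l'_1(l'_2(x,h))$ (together with the mixed relation (\ref{comp-2-first})) to rewrite both sides into the desired form. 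The compatibility of $[,]$ and $[,]'$ on $\mathfrak{g}$ should then be obtained by substituting $[m,n]=l_2(l_1(m),n)$ and $[m,n]'=l'_2(l'_1(m),n)$ into the compatibility identity, repeatedly using (\ref{comp-2-first}) to commute $l_1$ and $l'_1$ past $l'_2$ and $l_2$, and collapsing the resulting expression via (\ref{comp-2-idd}). Once these derivations are performed, both constructions are manifestly mutually inverse on the underlying data, yielding the claimed one-to-one correspondence.
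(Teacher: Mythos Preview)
Your proposal is correct and follows essentially the same construction as the paper: set $\mathfrak{g}=L_{-1}$, $\mathfrak{h}=L_0$, $t=l_1$, $t'=l'_1$, $\alpha=l_2|_{\mathfrak{h}\times\mathfrak{g}}$, $\alpha'=l'_2|_{\mathfrak{h}\times\mathfrak{g}}$, and induce the brackets on $\mathfrak{g}$ via $[m,n]=l_2(l_1(m),n)$, $[m,n]'=l'_2(l'_1(m),n)$. The paper's own proof is considerably terser---after invoking the Baez--Crans correspondence for each separate strict $L_\infty$-structure, it simply asserts that conditions (\ref{comp-2-id}) and (\ref{comp-2-idd}) yield the compatible Lie algebra structures on $\mathfrak{g}$ and $\mathfrak{h}$ and then states that ``it is easy to verify'' the crossed-module axioms, without isolating which axioms require genuine derivation.

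Your version is therefore more informative: you correctly match the directly corresponding pairs (axiom~(i) third line with (\ref{comp-2-first}); axiom~(ii) third line with the second compatibility identity; axiom~(iv) third line with (\ref{comp-2-idd}); the $\mathfrak{h}$-compatibility with (\ref{comp-2-id})), and you rightly flag that the $\mathfrak{g}$-compatibility and the mixed derivation axiom~(iii) are not in one-to-one correspondence with any single $L_\infty$-identity but must be \emph{derived} by substituting $h=l_1(m)$ or $h=l'_1(m)$ into (\ref{comp-2-idd}) and repeatedly using (\ref{comp-2-first}). This extra care is something the paper omits. One minor point: your parenthetical labeling ``third equation of (iii) for the $\mathfrak{g}$-level'' is slightly misleading---the $\mathfrak{g}$-compatibility is not part of axiom group (iii) but rather the requirement that $(\mathfrak{g},[~,~],[~,~]')$ itself be a compatible Lie algebra, stated in the definition before the enumerated list; you may wish to rephrase that for clarity.
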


\begin{proof}
Let $(L_{-1} \oplus L_0, \{ l_1, l_2, 0 \}, \{ l'_1, l'_2, 0 \})$ be a strict compatible $L_\infty$-algebra. Since $(L_{-1} \oplus L_0,  l_1, l_2, 0 )$ is a strict $L_\infty$-algebra \cite{baez-crans}, it follows that $\mathfrak{g} = L_{-1}$ and $\mathfrak{h} = L_0$ both carries Lie brackets given by 
\begin{align*}
[h, k ]:= l_2 ( l_1 (h), k ) = l_2 (h, l_1 (k)), ~ \text{ for } h, k \in L_{-1} ~~~ \text{ and } ~~~ [x, y]_\mathfrak{h} := l_2 (x,y),~\text{ for } x, y \in L_0.
\end{align*}
Similarly,  $(L_{-1} \oplus L_0,  l'_1, l'_2, 0 )$ is a strict $L_\infty$-algebra implies that $\mathfrak{g} = L_{-1}$ and $\mathfrak{h} = L_0$ equipped with another Lie brackets
\begin{align*}
[h,k]' := l_2' (l'_1 (h), k) = l'_2 (h, l'_1 (k)), \text{ for } h, k \in L_{-1} ~~~ \text{ and } ~~~ [x,y]'_\mathfrak{h} := l'_2 (x,y),~ \text{ for } x, y \in L_0.
\end{align*}
The compatibility conditions (\ref{comp-2-id}) and (\ref{comp-2-idd}) implies that $(\mathfrak{g} = L_{-1}, [~,~], [~,~]')$ and $(\mathfrak{h}= L_0, [~,~]_\mathfrak{h}, [~,~]'_\mathfrak{h})$ are compatible Lie algebras. Finally, we define maps
\begin{align*}
t = l_1 : \mathfrak{g} \rightarrow \mathfrak{h}, ~~~~ t' = l'_1 : \mathfrak{g} \rightarrow \mathfrak{h}, ~~~~ \alpha = l_2 : \mathfrak{h} \times \mathfrak{g} \rightarrow \mathfrak{g} ~~~ \text{ and } ~~~ \alpha' = l'_2 : \mathfrak{h} \times \mathfrak{g} \rightarrow \mathfrak{g}.
\end{align*}
Then it is easy to verify that $(\mathfrak{g}, \mathfrak{h}, t, t', \alpha, \alpha')$ is a crossed module of compatible Lie algebras.

Conversely, let $(\mathfrak{g}, \mathfrak{h}, t, t', \alpha, \alpha')$  be a crossed module of compatible Lie algebras. Then it is easy to verify that $(\underbrace{\mathfrak{g}}_{-1} \oplus \underbrace{\mathfrak{h}}_0, \{ l_1, l_2, 0 \}, \{ l'_1, l'_2 , 0 \})$ is a strict compatible $L_\infty$-algebra, where
\begin{align*}
&l_1 = t, ~~~~ l_2 (x, y) = [x, y]_\mathfrak{h}, ~~~~ l_2 (x,h) = -l_2 (h, x) := \alpha (x, h), ~ \text{ for } x, y \in \mathfrak{h},~ h \in \mathfrak{g},   \\
&l'_1 = t', ~~~~ l'_2 (x, y) = [x, y]'_\mathfrak{h}, ~~~~ l'_2 (x,h) = -l'_2 (h, x) := \alpha' (x, h), ~ \text{ for } x, y \in \mathfrak{h},~ h \in \mathfrak{g}, 
\end{align*}
Finally, the above two constructions are inverses to each other. This completes the proof. 
\end{proof}

\subsection*{Skeletal compatible $L_\infty$-algebras}\label{sec6}

\begin{thm}\label{skeletal-thm}
There is a one-to-one correspondence between skeletal compatible $L_\infty$-algebras and tuples $(\mathfrak{g}, V, (\theta, \theta + \theta' , \theta'))$, where $\mathfrak{g} = (\mathfrak{g}, [~,~], [~,~]')$ is a compatible Lie algebra, $V = (V, \rho, \rho')$ is a representation and $(\theta, \theta + \theta', \theta') \in C^3_c (\mathfrak{g}, V)$ is a $3$-cocycle induced by compatible Lie algebra cocycles.
\end{thm}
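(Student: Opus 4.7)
The plan is to make explicit, given a skeletal compatible $L_\infty$-algebra $(L_{-1}\oplus L_0,\{0,l_2,l_3\},\{0,l'_2,l'_3\})$, the algebraic data hidden inside it, and to recognise that data as the required tuple $(\mathfrak{g},V,(\theta,\theta+\theta',\theta'))$. The correspondence will then be unpacked in both directions, with the reverse direction amounting to reading the same formulas backwards.

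First I would set $\mathfrak{g}:=L_0$, $V:=L_{-1}$, and define
\begin{align*}
[x,y]:=l_2(x,y),\quad [x,y]':=l'_2(x,y),\quad \rho(x)h:=l_2(x,h),\quad \rho'(x)h:=l'_2(x,h),\\
\theta:=l_3,\qquad \theta':=l'_3,
\end{align*}
for $x,y\in L_0$ and $h\in L_{-1}$. The $L_\infty$-identities (\ref{sk-lie})–(\ref{sk3-lie}) for $(0,l_2,l_3)$ specialise, with $l_1=0$, to: Jacobi for $[~,~]$ on $\mathfrak{g}$, the representation axiom for $\rho$ on $V$, and the Chevalley–Eilenberg cocycle condition $\partial\theta=0$. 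This is exactly the standard skeletal correspondence of \cite{baez-crans}. Applying the same reasoning to $(0,l'_2,l'_3)$ yields a second Lie bracket $[~,~]'$, a second representation $\rho'$, and $\partial'\theta'=0$.

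Next I would unpack the five compatibility conditions (\ref{comp-2-first})–(\ref{comp-2-iddd}) under $l_1=l'_1=0$. The first two become trivial. The identity (\ref{comp-2-id}), restricted to $x,y,z\in L_0$, is precisely the compatibility (\ref{comp-lie-def-id}) of $[~,~]$ and $[~,~]'$, so $(\mathfrak{g},[~,~],[~,~]')$ is a compatible Lie algebra. The identity (\ref{comp-2-idd}), read as an equality in $L_{-1}$ for $x,y\in L_0$ and $h\in L_{-1}$, rewrites as
\[
\rho([x,y]')h+\rho'([x,y])h=[\rho(x),\rho'(y)]h+[\rho'(x),\rho(y)]h,
\]
so $(V,\rho,\rho')$ is a representation of the compatible Lie algebra.

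The main obstacle is the identity (\ref{comp-2-iddd}). Using $l_2(w,l'_3(x,y,z))=\rho(w)\theta'(x,y,z)$ and $l'_3(l_2(w,x),y,z)=\theta'([w,x],y,z)$, and their primed analogues, a direct (if long) comparison of the twelve terms on each side of (\ref{comp-2-iddd}) with the ten-term expansions of $(\partial\theta')(w,x,y,z)$ and $(\partial'\theta)(w,x,y,z)$ rearranges (\ref{comp-2-iddd}) into the single equation $\partial\theta'+\partial'\theta=0$. Combined with $\partial\theta=0$ and $\partial'\theta'=0$, this is exactly the hypothesis of Remark~\ref{rem-comp-co}, so $(\theta,\theta+\theta',\theta')\in C^3_c(\mathfrak{g},V)$ is a $3$-cocycle induced by compatible Lie algebra cocycles.

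For the converse, starting from a tuple $(\mathfrak{g},V,(\theta,\theta+\theta',\theta'))$ as in the statement, I would put $L_0:=\mathfrak{g}$, $L_{-1}:=V$, $l_1=l'_1:=0$, and read the defining formulas above in reverse to recover $l_2,l'_2,l_3=\theta,l'_3=\theta'$. Each of the equalities identified in the forward direction becomes the corresponding skeletal $L_\infty$-axiom or compatibility axiom, so $(L_{-1}\oplus L_0,\{0,l_2,l_3\},\{0,l'_2,l'_3\})$ is a skeletal compatible $L_\infty$-algebra. The two constructions are manifestly mutually inverse, which yields the desired bijection.
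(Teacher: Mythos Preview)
Your proposal is correct and follows essentially the same route as the paper's own proof: identify $\mathfrak{g}=L_0$, $V=L_{-1}$, extract the two Lie brackets and representations from $l_2,l'_2$, obtain $\partial\theta=\partial'\theta'=0$ from (\ref{sk3-lie}) and its primed analogue, read off the compatibilities of the Lie brackets and representations from (\ref{comp-2-id}) and (\ref{comp-2-idd}), recognise (\ref{comp-2-iddd}) as $\partial\theta'+\partial'\theta=0$, and then invoke Remark~\ref{rem-comp-co}. The converse and the mutual-inverse verification are likewise identical in spirit to what the paper does.
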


\begin{proof}
Let $(L_{-1} \oplus L_0, \{ 0, l_2, l_3 \}, \{ 0, l'_2, l'_3 \})$ be a skeletal compatible $L_\infty$-algebra. Since $(L_{-1} \oplus L_0, 0, l_2, l_3)$ is a skeletal $L_\infty$-algebra \cite{baez-crans}, it follows from (\ref{sk-lie}) that $\mathfrak{g} = L_0$ is a Lie algebra with the bracket $[x,y]:= l_2 (x,y)$, for $x,y \in \mathfrak{g} = L_0$. Moreover, if follows from (\ref{sk2-lie}) that $V = L_{-1}$ is a representation of the Lie algebra $(\mathfrak{g}, [~,~])$ with the action map $\rho : \mathfrak{g} \rightarrow \mathrm{End}(V)$, $\rho (x) (h) := l_2 (x, h)$, for $x \in \mathfrak{g}$ and $h \in V$. Finally, the condition (\ref{sk3-lie}) says that the skew-symmetric trilinear map $l_3 : L_0 \times L_0 \times L_0 \rightarrow L_{-1}$ ~ (i.e. $l_3 \in \mathrm{Hom}(\wedge^3 \mathfrak{g}, V)$) is a $3$-cocycle in the Chevalley-Eilenberg cochain complex of the Lie algebra $(\mathfrak{g}, [~,~])$ with coefficients in the representation $(V, \rho)$, i.e. $\delta (l_3) = 0$. Similarly,  $(L_{-1} \oplus L_0, 0, l'_2, l'_3)$ is a skeletal $L_\infty$-algebra implies that $(\mathfrak{g} = L_0, [~,~]')$ is a Lie algebra, $(V = L_{-1}, \rho')$ is a representation, where
\begin{align*}
[x,y]' := l'_2 (x, y) ~~~~ \text{ and } ~~~~ \rho'(x) (h) = l'_2 (x, h), ~ \text{ for } x, y \in \mathfrak{g} = L_0 \text{ and } h \in V = L_{-1}. 
\end{align*}
And the skew-symmetric trilinear map $l'_3 : L_0 \times L_0 \times L_0 \rightarrow L_{-1}$ (i.e. $l_3 \in \mathrm{Hom}(\wedge^3 \mathfrak{g}, V)$) is a $3$-cocycle in the Chevalley-Eilenberg complex of the Lie algebra $(\mathfrak{g}, [~,~]')$ with coefficients in the representation $(V, \rho')$, i.e. $\delta' (l'_3) = 0$.

Finally, the condition (\ref{comp-2-id}) implies that $(\mathfrak{g}, [~,~], [~,~]')$ is a compatible Lie algebra, the condition (\ref{comp-2-idd}) implies that $(V, \rho, \rho')$ is a representation of the compatible Lie algebra. Lastly, the condition (\ref{comp-2-iddd}) is equivalent to $\delta (l'_3) + \delta' (l_3) = 0$. Therefore, by Remark \ref{rem-comp-co}, we have $\delta_c (l_3, l_3 + l'_3, l'_3) = 0$. As a summary, we get a required triple $(\mathfrak{g}, V, (l_3, l_3 + l'_3 + l'_3))$. 

Conversely, let $(\mathfrak{g}, V, (\theta , \theta + \theta' , \theta'))$ be a triple consisting of a compatible Lie algebra $\mathfrak{g} = (\mathfrak{g}, [~,~], [~,~]')$, a representation $V = (V, \rho, \rho')$ and a $3$-cocycle $(\theta, \theta + \theta', \theta') \in C^3_c (\mathfrak{g}, V)$ induced by compatible Lie algebra cocycles. Then it is easy to verify that $(\underbrace{V}_{-1} \oplus \underbrace{\mathfrak{g}}_{0}, \{ 0, l_2, l_3 \}, \{ 0, l'_2, l'_3 \})$ is a skeletal compatible $L_\infty$-algebra, where
\begin{align*}
&l_2 (x,y) := [x, y], ~~~~ l_2 (x, h) = - l_2 (h, x) = \rho(x) (h), ~~~~ l_3 = \theta,\\
&l'_2 (x,y) := [x, y]', ~~~~ l'_2 (x, h) = - l'_2 (h, x) = \rho'(x) (h), ~~~~ l'_3 = \theta',
\end{align*}
for $x, y \in \mathfrak{g}$ and $h \in V$. The above two correspondences are inverses to each other. Hence the proof.
\end{proof}

\section{Compatible Lie $2$-algebras}\label{sec7}
In this section, we introduce compatible Lie $2$-algebras as the categorification of compatible Lie algebras. We show that the category of compatible Lie $2$-algebras is equivalent to category ${\bf 2CompL^1_\infty}$ introduced in Section \ref{sec6} (Notation \ref{notat}).

Like a (compatible) Lie algebra has an underlying vector space, a (compatible) Lie $2$-algebra has an underlying $2$-vector space. A $2$-vector space is basically a category internal to the category {\bf Vect} of vector spaces. More precisely, a $2$-vector space $\mathcal{C}$ is a category with a vector space of objects $\mathcal{C}_0$ and a vector space of morphisms $\mathcal{C}_1$ such that the source and target maps $s, t : \mathcal{C}_1 \rightarrow \mathcal{C}_0$, the identity assigning map $i : \mathcal{C}_0 \rightarrow \mathcal{C}_1$, and the composition map $\circ : C_1 \times_{\mathcal{C}_0} \mathcal{C}_1$ are all linear maps. Given $2$-vector spaces $\mathcal{C}$ and $\mathcal{D}$, a morphism $F : \mathcal{C} \rightarrow \mathcal{D}$ is just a functor in the category  {\bf Vect}.

\begin{defn}\cite{baez-crans} A Lie $2$-algebra is a $2$-vector space $\mathcal{C}$ together with a skew-symmetric bilinear functor $[~,~]: \mathcal{C} \times \mathcal{C} \rightarrow \mathcal{C}$ and a skew-symmetric trilinear natural isomorphism (called the Jacobiator)
\begin{align*}
J_{x,y,z} : [[x,y], z] \rightarrow [[x,z], y]+[x, [y,z]], \text{ for } x, y, z \in \mathcal{C}_0
\end{align*}
which makes the following diagram commutative: for $w, x, y, z \in \mathcal{C}_0$,
\begin{align*}
\xymatrixrowsep{0.6in}
\xymatrixcolsep{0.1in}
\xymatrix  {
& [[[w, x],y],z] \ar[rd]^{J_{[w,x], y, z}} \ar[ld]_{[J_{w,x,y}, z]} & \\
[[[w,y],x],z] +  [[[w, [x,y]],z] \ar[d]^{J_{[w,y],x,z} + J_{w,[x,y], z}} & & [[[w,x],z],y]+[[w,x],[y,z]] \ar[d]_{[J_{w,x,z}, y] + J_{w,x,[y,z]}} \\ 
{ \substack{  [[[w,y],z],x]+[[w,y],[x,z]] \\ ~~~ +[[w,z],[x,y]] + [w,[[x,y],z]]}}  \ar[rd]_{[J_{w,y,z}, x] + 1 + 1+ [w, J_{x,y,z}]}
  & & { \substack{  [[[w,z],x],y] + [[w, [x,z]],y]  \\ ~~~ + [[w, [y,z]],x] + [w, [x,[y,z]]] }}  \ar[ld]^{J_{[w,z],x,y} + J_{w, [x,z],y} + 1 + 1} \\
 &  { \substack{ [[[w,z],y],x] + [[w,z],[x,y]] + [[w,y],[x,z]] \\ ~~~+ [w, [[x,z], y]] + [[w,[y,z]], x] + [w, [x,[y,z]]]. }} & 
% &  & 
}
\end{align*}
A Lie $2$-algebra as above may be denoted by the triple $(\mathcal{C}, [~,~], J)$.
\end{defn}

\begin{defn}
Let $(\mathcal{C}, [~,~], J)$ and $(\mathcal{D}, \lfloor ~, ~\rfloor , K)$ be Lie $2$-algebras. A morphism between them consists of a pair $(F, G)$ of a linear functor $F : \mathcal{C} \rightarrow \mathcal{D}$ on the underlying $2$-vector spaces and a skew-symmetric bilinear natural transformation $G_{x, y} : \lfloor F(x), F(y)\rfloor \rightarrow F[x,y]$, for $x, y \in \mathcal{C}_0$, satisfying
\begin{align*}
\xymatrix  {
\lfloor \lfloor F(x), F(y) \rfloor, F(z) \rfloor \ar[r]^{K_{F(x), F(y), F(z)}} \ar[d]_{\lfloor G_{x,y}, 1 \rfloor} & \lfloor \lfloor F(x), F(z) \rfloor, F(y) \rfloor + \lfloor F(x), \lfloor F(y), F(z) \rfloor \rfloor \ar[d]^{   \lfloor G_{x,z}, 1 \rfloor + \lfloor 1, G_{y,z} \rfloor} \\
\lfloor F[x,y], F(z) \rfloor \ar[d]_{G_{[x,y], z}} & \lfloor F[x,z], F(y) \rfloor + \lfloor F(x), F[y,z] \rfloor \ar[d]^{ G_{[x,z],y} + G_{x, [y,z]}  } \\
F[[x,y], z] \ar[r]_{F(J_{x,y,z})} &  F [[x,z],y] + F [x, [y,z]].
}
\end{align*}
\end{defn}

In the following, we introduce compatible Lie $2$-algebras and morphism between them. 
%Before that, we need to introduce some notation. Consider the set 
%\begin{align*}
%I = \{ (i,j, k) |~ i, j, k \in \{ 1, 2 \} \text{ and } 3 < i+j+k < 6 \}
%\end{align*}
%consisting of six elements $(1,1,2), (1,2,1), (2,1,1), (2,2,1), (2,1,2), (1,2,2)$. In the following, we write a summation like $\sum_{(i,j,k) \in I} f_{(i,j,k)}$ whose index lies in $I$ simply by $f_{(i,j,k)}$. For example, by $[[[w,x]_i,y]_j,z]_k$ we mean the sum
%\begin{align*}
%[[[w,x]_1,y]_1,z]_2 + [[[w,x]_1,y]_2,z]_1 + [[[w,x]_2,y]_1,z]_1 + [[[w,x]_2,y]_2,z]_1 + [[[w,x]_2,y]_1,z]_2 + [[[w,x]_1,y]_2,z]_2. 
%\end{align*}

\begin{defn}
A compatible Lie $2$-algebra is a tuple $(\mathcal{C}, [~,~], J, [~,~]', J', J^c)$ in which $(\mathcal{C}, [~,~], J)$ and $(\mathcal{C}, [~,~]', J')$ are Lie $2$-algebras and $J^c$ is a trilinear skew-symmetric natural isomorphism
\begin{align*}
J^c_{x, y, z} : [[x,y], z]' + [[x,y]', z] \rightarrow [[x,z], y]' + [[x,z]', y] + [x, [y,z] ]' + [x, [y,z]' ]
\end{align*}
which makes the triple $(\mathcal{C}, [~,~] + [~,~]', J + J' + J^c)$ into a Lie $2$-algebra.
\end{defn} 

\begin{defn}
Let $(\mathcal{C}, [~,~], J, [~,~]', J', J^c)$ and $(\mathcal{D}, \lfloor~,~ \rfloor, K, \lfloor ~, ~\rfloor', K', K^c)$ be two compatible Lie $2$-algebras. A morphism between them is a triple $(F, G, G')$ consisting of a linear funtor $F : \mathcal{C} \rightarrow \mathcal{D}$ on the underlying $2$-vector spaces, and two skew-symmetric bilinear natural transformations
\begin{align*}
G_{x,y}: \lfloor F(x), F(y) \rfloor \rightarrow F[x,y]    ~~~~ \text{ and } ~~~ G'_{x,y}: \lfloor F(x), F(y) \rfloor' \rightarrow F[x,y]'
\end{align*}
such that $(F,G)$ is a morphism of Lie $2$-algebras from $(\mathcal{C}, [~,~], J)$ to $(\mathcal{D}, \lfloor~,~\rfloor, K)$, and $(F,G')$ is a morphism of Lie $2$-algebras from $(\mathcal{C}, [~,~]', J')$ to $(\mathcal{D}, \lfloor~,~\rfloor', K').$
\end{defn}

Compatible Lie $2$-algebras and morphisms between them forms a category. We denote this category by {\bf CompLie2}.

\begin{thm}\label{2-2thm}
The categories {\bf 2CompL$^1_\infty$} and  {\bf CompLie2} are equivalent.
\end{thm}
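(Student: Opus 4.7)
The plan is to adapt Baez and Crans' equivalence between 2-term $L_\infty$-algebras and Lie 2-algebras \cite{baez-crans} to the compatible setting by applying it simultaneously to both structures on the same underlying 2-vector space, and then using the Remark preceding Notation \ref{notat} to convert compatibility of the two $L_\infty$ structures into the existence of the natural isomorphism $J^c$.

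First I would define a functor $\mathsf{F}\colon \mathbf{CompLie2}\to \mathbf{2CompL^1_\infty}$. Given $(\mathcal{C},[~,~],J,[~,~]',J',J^c)$, the underlying 2-vector space $\mathcal{C}$ already determines a 2-term chain complex with $L_0=\mathcal{C}_0$, $L_{-1}=\ker(s)$ and $l_1=t|_{L_{-1}}$; since this $l_1$ depends only on $\mathcal{C}$, applying Baez-Crans to $(\mathcal{C},[~,~],J)$ and to $(\mathcal{C},[~,~]',J')$ yields two 2-term $L_\infty$-structures $(l_1,l_2,l_3)$ and $(l_1,l_2',l_3')$ that automatically share $l_1=l_1'$. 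By hypothesis $(\mathcal{C},[~,~]+[~,~]',J+J'+J^c)$ is itself a Lie 2-algebra; running Baez-Crans once more produces the 2-term $L_\infty$-algebra $(L_{-1}\oplus L_0,l_1,l_2+l_2',2(l_3+l_3'))$, and the Remark then gives the compatibility conditions (\ref{comp-2-first})--(\ref{comp-2-iddd}). Conversely I define $\mathsf{G}\colon \mathbf{2CompL^1_\infty}\to \mathbf{CompLie2}$ by forming the 2-vector space $\mathcal{C}$ with $\mathcal{C}_0=L_0$, $\mathcal{C}_1=L_0\oplus L_{-1}$, $s(x,h)=x$, $t(x,h)=x+l_1(h)$, pulling back $(l_2,l_3)$ and $(l_2',l_3')$ through the inverse Baez-Crans prescription to obtain $([~,~],J)$ and $([~,~]',J')$, and defining $J^c$ by the identity $J+J'+J^c=$ (Jacobiator of the sum bracket built from $l_2+l_2'$ and $2(l_3+l_3')$); the Remark guarantees this sum structure really is a Lie 2-algebra.

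On morphisms, both directions are immediate: a $\mathbf{2CompL^1_\infty}$-morphism consists of a single pair $(f_{-1},f_0)$ together with two bilinear maps $f,f'$, which translates directly into a linear functor $F$ and two natural transformations $G,G'$, and vice versa. The natural isomorphisms $\mathsf{F}\mathsf{G}\simeq\mathrm{id}$ and $\mathsf{G}\mathsf{F}\simeq\mathrm{id}$ are then inherited pointwise from the Baez-Crans equivalence applied to each of the two underlying structures, since $\mathsf{F}$ and $\mathsf{G}$ act on underlying 2-vector spaces and 2-term complexes by the Baez-Crans recipe.

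The main obstacle will be the sign-and-linearity bookkeeping behind the Remark itself: verifying that the pentagon identity for the Jacobiator of $[~,~]+[~,~]'$, after subtracting off the pentagons for $J$ alone and $J'$ alone, collapses precisely to the mixed identity (\ref{comp-2-iddd}), and that the coefficient $2$ appearing in $2(l_3+l_3')$ is exactly what is needed to express $J^c$ as a natural isomorphism with the stated source and target. Once this matching is pinned down, no further categorical verification is required: the rest of the argument is just two parallel applications of the Baez-Crans equivalence glued along their common underlying 2-vector space.
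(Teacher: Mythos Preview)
Your strategy is the paper's: run Baez--Crans twice on the shared $2$-vector space to pass between the two $L_\infty$-structures and the two Lie $2$-structures, and encode compatibility through the sum structure via the Remark preceding Notation~\ref{notat}. On objects, and for the natural isomorphisms $\mathsf{FG}\simeq\mathrm{id}$ and $\mathsf{GF}\simeq\mathrm{id}$, this works exactly as you describe, and the explicit formula you anticipate for $J^c$ is the one the paper writes down.

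The gap is in your treatment of morphisms. You assert that a $\mathbf{2CompL^1_\infty}$-morphism carries \emph{two} bilinear maps $f,f'$; but by the definition in the paper a morphism is a \emph{single} triple $(f_{-1},f_0,f)$ that is simultaneously an $L_\infty$-morphism for both structures --- there is only one $f$. So the translation on morphisms is not ``immediate''. Going $\mathbf{CompLie2}\to\mathbf{2CompL^1_\infty}$, a triple $(F,G,G')$ yields via Baez--Crans two a~priori different elements $G_{x,y}-i\bigl(s(G_{x,y})\bigr)$ and $G'_{x,y}-i\bigl(s(G'_{x,y})\bigr)$ of $\mathrm{Hom}(L_0\wedge L_0,M_{-1})$; the paper takes their common value as $f$, whereas your sketch, by positing separate $f,f'$, would land in a category with a strictly larger class of morphisms than $\mathbf{2CompL^1_\infty}$. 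In the opposite direction, from a single $f$ one must build \emph{two} natural transformations $G,G'$ (same $\ker(s)$-component, different sources $\lfloor F(x),F(y)\rfloor$ and $\lfloor F(x),F(y)\rfloor'$) and verify the Lie $2$-morphism coherence diagram for each; this is precisely where the hypothesis that the single $(f_{-1},f_0,f)$ is an $L_\infty$-morphism for \emph{both} structures is actually used. Your ``vice versa'' hides this asymmetry, and the proof does not go through until it is addressed.
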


\begin{proof}
Let $L = (L_{-1} \oplus L_0, \{ l, l_2, l_3 \}, \{ l, l'_2, l'_3 \})$ be an element in {\bf 2CompL$^1_\infty$}. We define a $2$-vector space $\mathcal{C}$ whose set of objects $\mathcal{C}_0 = L_0$ and the set of morphisms $\mathcal{C}_1 = L_{-1} \oplus L_0$, and structure maps
\begin{align*}
s (h, x) = x, ~~~~ t (h, x) = l(h) + x  ~~\text{ and } ~~ i(x) = (0, x).
\end{align*}
Define a skew-symmetric bilinear functor $[~,~] : \mathcal{C} \times \mathcal{C} \rightarrow \mathcal{C}$ and a skew-symmetric trilinear natural isomorphism $J$ by
\begin{align*}
[(h,x), (k, y)] := ( l_2 (h, y) + l_2 (x,k) + l_2 (dh, k),~ l_2 (x,y)) ~~ \text{ and } ~~
J_{x,y,z} := \big(  l_3 (x,y,z), ~l_2 (l_2 (x,y), z)  \big).
\end{align*}
Since $(L_{-1} \oplus L_0, l, l_2, l_3)$ is a $2$-term $L_\infty$-algebra, it follows from \cite{baez-crans} that $(\mathcal{C}, [~,~], J)$ is a Lie $2$-algebra. Similarly, $(L_{-1} \oplus L_0, l, l'_2, l'_3)$ is a $2$-term $L_\infty$-algebra implies that $(\mathcal{C}, [~,~]', J')$ is a Lie $2$-algebra, where
\begin{align*}
[(h,x), (k, y)]' := ( l'_2 (h, y) + l'_2 (x,k) + l'_2 (dh, k),~ l'_2 (x,y)) ~~ \text{ and } ~~
J'_{x,y,z} := \big(  l'_3 (x,y,z), ~l'_2 (l'_2 (x,y), z)  \big).
\end{align*}
Finally, we define another skew-symmetric trilinear natural isomorphism $J^c$ by
\begin{align*}
J^c_{x,y,z} := (l_3 (x,y,z) + l'_3 (x,y,z),~ l_2 (l'_2 (x,y), z) + l'_2 (l_2 (x,y), z)).
\end{align*}
By various conditions (\ref{comp-2-first})-(\ref{comp-2-iddd}) of the compatibility of the $L_\infty$-algebras $(L_{-1} \oplus L_0, l, l_2, l_3)$ and $(L_{-1} \oplus L_0, l, l'_2, l'_3)$, we deduce that $(\mathcal{C}, [~,~], J, [~,~]', J', J^c)$ is a compatible Lie $2$-algebra. We denote this compatible Lie $2$-algebra by $\Phi (L).$

Next, let $(f_{-1}, f_0, f) : \underbrace{(L_{-1} \oplus L_0, \{ l, l_2, l_3 \}, \{ l, l'_2, l'_3 \})}_L \rightsquigarrow \underbrace{(M_{-1} \oplus M_0, \{ m, m_2, m_3 \}, \{ m, m'_2, m'_3 \})}_M$ be a morphism of $2$-term compatible $L_\infty$-algebras. We define a morphism $(F,G,G'): \Phi (L) \rightsquigarrow \Phi (M) $ of compatible Lie $2$-algebras by
\begin{align*}
F ((h,x)) = (f_{-1}(h), f_0 (x)), ~~ G_{x,y} = (f (x,y),~ l_2 (f_0 (x), f_0 (y) )) ~~ \text{ and } ~~ G'_{x,y} = (f (x,y),~ l'_2 (f_0 (x), f_0 (y) )).
\end{align*}One can also show that such construction preserves identity morphisms and the composition of morphisms. Thus, we obtain a functor $\Phi : {\bf 2CompL^1_\infty} \rightarrow {\bf CompLie2}$.

\medskip

Conversely, let $\mathcal{C} = (\mathcal{C}, [~,~], J, [~,~]', J', J^c)$ be a compatible Lie $2$-algebra. We define two vector spaces $L_{-1}, L_0$ and a linear map $l : L_{-1} \rightarrow L_0$ by
\begin{align*}
L_{-1} = \mathrm{ker}(s), ~~~ L_0 = \mathcal{C}_0 ~~~ \text{ and } ~~~ l = t|_{\mathrm{ker}(s)}.
\end{align*}
Define a skew-symmetric bilinear map $l_2 : L_i \times L_j \rightarrow L_{i+j}$ $(-1 \leq i, j , i+j \leq 0)$ and a skew-symmetric trilinear map $l_3 : L_0 \times L_0 \times L_0 \rightarrow L_{-1}$ by
\begin{align*}
l_2 (x, y) = [x, y], ~~~ l_2 (x, h) = -l_2 (h, x) = [i(x), h] ~~ \text{ and } ~~ l_3 (x, y,z) = \mathrm{pr}_1 (J_{x, y, z}).
\end{align*}
Since $(\mathcal{C}, [~,~], J)$ is a Lie $2$-algebra, it follows from \cite{baez-crans} that $(L_{-1} \oplus L_0, l, l_2, l_3)$ is a $2$-term $L_\infty$-algebra. Similarly, $(\mathcal{C}, [~,~]', J')$ is a Lie $2$-algebra implies that $(L_{-1} \oplus L_0, l, l'_2, l'_3)$ is a  $2$-term $L_\infty$-algebra, where
\begin{align*}
l'_2 (x, y) = [x, y]', ~~~ l'_2 (x, h) = -l'_2 (h, x) = [i(x), h]' ~~ \text{ and } ~~ l'_3 (x, y,z) = \mathrm{pr}_1 (J'_{x, y, z}).
\end{align*}
Finally, the compatibility of Lie $2$-algebras $(\mathcal{C}, [~,~], J)$ and $(\mathcal{C}, [~,~]', J')$ implies that $(L_{-1} \oplus L_0, \{ l, l_2, l_3 \}, \{ l, l'_2, l'_3 \})$ is a $2$-term compatible $L_\infty$-algebra. We denote this algebra by $\Psi (\mathcal{C})$.

Next, let $(F, G, G')$ be a morphism from $\underbrace{(\mathcal{C}, [~,~], J, [~,~]', J', J^c)}_\mathcal{C}$ to $\underbrace{(\mathcal{D}, \lfloor ~,~ \rfloor, K, \lfloor ~,~ \rfloor', K', K^c)}_\mathcal{D}$. Then it follows from \cite{baez-crans} that $(f_{-1}, f_0, f) : \Psi (\mathcal{C}) \rightsquigarrow \Psi (\mathcal{D})$ is a morphism of compatible $L_\infty$-algebras, where
\begin{align*}
f_{-1} := F|_{\mathrm{ker}(s)}, ~~~ f_0 := F|_{\mathcal{C}_0} ~~ \text{ and } ~~ f(x,y) := G(x,y) - i ( s (G(x,y)) )= G'(x,y) - i ( s (G'(x,y)) ). 
\end{align*} 
This construction preserves the identity morphisms and the composition of morphisms. Hence we get a functor $\Psi : {\bf CompLie2} \rightarrow {\bf 2CompL^1_\infty}.$

\medskip

We are now left to show that there are natural isomorphisms $\alpha : \Psi \circ \Phi \Rightarrow 1_{ {\bf 2CompL^1_\infty} }$ and $\beta : \Phi \circ \Psi \Rightarrow 1_{\bf CompLie2}$.  Let $L = (L_{-1} \oplus L_0, \{ l, l_2, l_3 \}, \{ l, l'_2, l'_3 \})$ be a compatible $L_\infty$-algebra in ${\bf 2CompL^1_\infty}$. By applying $\Psi \circ \Phi$, we get that $(\Psi \circ \Phi)(L)$ is same as $L$. Hence $\alpha$ is the identity transformation.

On the other hand, let $\mathcal{C} = (\mathcal{C}, [~,~], J, [~,~]', J', J^c)$ be a compatible Lie $2$-algebra. If we apply $\Psi$, we get the $2$-term compatible $L_\infty$-algebra
\begin{align*}
\Psi (\mathcal{C}) = (\mathrm{ker}(s) \oplus \mathcal{C}_0, \{ l = t|_{\mathrm{ker}(s)} , l_2, l_3 \} , \{ l = t|_{\mathrm{ker}(s)} , l'_2, l'_3 \}).
\end{align*}
By applying $\Phi$ to it, we get the compatible Lie $2$-algebra $\dot{\mathcal{C}} = (\dot{\mathcal{C}}, \dot{[~,~]}, \dot{J}, \dot{[~,~]'}, \dot{J'}, \dot{J^c})$, where the space of objects and the space of morphisms are respectively given by $\dot{\mathcal{C}}_0 = \mathcal{C}_0$ and $\dot{\mathcal{C}}_1 = \mathrm{ker}(s) \oplus \mathcal{C}_0$. Define $\beta_\mathcal{C} : \dot{\mathcal{C}} \rightarrow \mathcal{C}$ by $\beta_\mathcal{C} (h, x) = h + i(x)$. Then it is easy to verify that $\beta_\mathcal{C}$ is an isomorphism of compatible Lie $2$-algebras. This is infact a natural isomorphism. This completes the proof.
\end{proof}

\vspace*{0.5cm}
\noindent {\bf Acknowledgements.} Some parts of the work was carried out when the author was a postdoctoral fellow at Indian Institute of Technology (IIT) Kanpur. The author thanks the Institute for financial support.

%\noindent {\bf Data Availability Statement.} NA.

%\mbox{ }\\
%
%\providecommand{\bysame}{\leavevmode\hbox to3em{\hrulefill}\thinspace}
%\providecommand{\MR}{\relax\ifhmode\unskip\space\fi MR }
% \MRhref is called by the amsart/book/proc definition of \MR.
%\providecommand{\MRhref}[2]{%
 % \href{http://www.ams.org/mathscinet-getitem?mr=#1}{#2}
%}
%\providecommand{\href}[2]{#2}


\begin{thebibliography}{BFGM03}

\bibitem{azimi} M. J. Azimi, C. Laurent-Gengoux and J. M. Nunes Da Costa, Nijenhuis forms on $L_\infty$-algebras and Poisson geometry, {\em Diff. Geom. Appl.} 38 (2015) 69-113.

\bibitem{baez-crans} J. Baez and A. S. Crans, Higher-Dimensional Algebra VI: Lie $2$-algebras, {\em Theor. Appl. Categ.} 12 (2004) 492-538.

\bibitem{bol} A. V. Bolsinov and A. V. Borisov, Lax representation and compatible Poisson brackets on Lie algebras, {\em Math. Notes} 72 (2002), 11-34.

\bibitem{das-comp} T. Chtioui, A. Das and S. Mabrouk, (Co)homology of compatible associative algebras, arXiv:2107.09259 (2021).

\bibitem{l-gau} M. Daily and T. Lada, A finite dimensional $L_\infty$-algebra example in gauge theory, {\em Homology Homotopy Appl.} 7 (2005) 87-93.

\bibitem{dotsenko} V. Dotsenko and A. S. Khoroshkin, {Character formulas for the operads of two compatible brackets and for the
bi-Hamiltonian operad}, {\em Funct. Anal. Appl.} 41 (2007) 1-17.

\bibitem{gers} M. Gerstenhaber, On the deformation of rings and algebras, {\em Ann. of Math. (2)} 79 (1964), 59-103.

\bibitem{golu1} I. Z. Golubchik and V. V. Sokolov, Compatible Lie brackets and integrable equations of the principal chiral model type, {\em Funct. Anal. Appl.} 36 (2002), 172-181.

\bibitem{golu2} I. Z. Golubchik and V. V. Sokolov, Factorization of the loop algebras and compatible Lie brackets, {\em J. Nonlinear Math. Phys.} 12 (2005), 343-350.

\bibitem{golu3} I. Z. Golubchik and V. V. Sokolov, Compatible Lie brackets and the Yang-Baxter equation, {\em Theor. Math. Phys.} 146 (2006), 159-169.

\bibitem{l-fie} O. Hohm and B. Zwiebach, $L_\infty$-algebras and field theory, {\em Fortsch. Phys.}  65 (3-4) (2017) 1700014.

\bibitem{l-def} M. Kontsevich and Y. Soibelman, Deformation theory I, Book in preparation.

\bibitem{kos} Y. Kosmann-Schwarzbach and F. Magri, {Poisson-Nijenhuis structures}, {\em Ann. Inst. Henri Poincar\'{e} A} 53 (1990)
35-81.

\bibitem{yks} Y. Kosmann-Schwarzbach, Nijenhuis structures on Courant algebroids, {\em Bull. Braz. Math. Soc., New Series} 42 (2011) 625-649.

\bibitem{comp-lie}
J. Liu, Y. Sheng and C. Bai, Maurer-Cartan characterizations and cohomologies of compatible Lie algebras, arXiv:2102.04742 (2021).

\bibitem{lada-markl} T. Lada and M. Markl, Strongly homotopy Lie algebras, {\em Comm. Algebra} 23 (6) (1995) 2147--2161.

\bibitem{jim} T. Lada and J. Stasheff, Introduction to sh Lie algebras for physicists, {\em Int. J. Theor. Phys.} 32 (1993) 1087-1103.

\bibitem{liu-cou} Z.-J. Liu, A. Weinstein and P. Xu, Manin triples for Lie bialgebroids, {\em J. Diff. Geom.} 45 (1997) 547-574.

%\bibitem{loday-book} J.-L. Loday, Cyclic homology, Grundlehren der Mathematischen Wissenschaften, vol. 301. Springer-Verlag, Berlin (1992).

\bibitem{mag-mor} F. Magri and C. Morosi,  A geometrical characterization of integrable hamiltonian systems through the theory
of Poisson-Nijenhuis manifolds, Quaderno S/19, Milan, 1984.

%\bibitem{maj-mukh} A. Majumdar and G. Mukherjee, 

%\bibitem{magid} A. R. Magid, Lectures on differential Galois theory, University Lecture Series, 7. {\em American Mathematical Society, Providence, RI,} 1994. 

\bibitem{mar} S. M\'{a}rquez, Compatible associative bialgebras, {\em Comm. Algebra} 46 (2018) 3810-3832.

\bibitem{nij-ric} A. Nijenhuis and R. W. Richardson, Deformations of Lie algebra structures, {\em J. Math. Mech.} 17 (1967) 89-105.

\bibitem{Odesskii1}
A. V. Odesskii and V.V. Sokolov, Compatible Lie brackets related to elliptic curve, {\em J. Math. Phys.} 47, No. 1 (2006) 013506.

\bibitem{Odesskii2}
A. V. Odesskii and V. V. Sokolov, Algebraic structures connected with pairs of compatible associative algebras, {\em Int. Math. Res. Not.} 2006, No. 19 (2006), Article ID 43743.

\bibitem{Odesskii3}
A. V. Odesskii and V.V. Sokolov, Pairs of compatible associative algebras, classical Yang-Baxter equation and quiver representations, {\em Comm. Math. Phys.} 278, No. 1 (2008), 83-99.

\bibitem{l-mul} C. L. Rogers, $L_\infty$-algebras from multisymplectic geometry, {\em Lett. Math. Phys.} 100 (2012) 29-50.

\bibitem{l-cou} D. Roytenberg and A. Weinstein, Courant algebroids and strongly homotopy Lie algebras, {\em Lett. Math. Phys.} 46 (1998) 81-93.

\bibitem{roy-thesis} D. Roytenberg, Courant algebroids, derived brackets and even symplectic supermanifolds, PhD Thesis, UC Berkeley (1999).

\bibitem{stas} J. Stasheff, Homotopy associativity of $H$-spaces I, {\em Trans. Amer. Math. Soc.} 108 (1963) 275-292. 

\bibitem{stro} H. Strohmayer, {Operads of compatible structures and weighted partitions}, {\em J. Pure Appl. Algebra} 212 (2008) 2522-2534.

\bibitem{tang-o} R. Tang, C. Bai, L. Guo and Y. Sheng, Deformations and their controlling cohomologies of $\mathcal{O}$-operators, {\em Comm. Math. Phys.} 368 (2019) 665-700.

%\bibitem{uchino} K. Uchino, {Quantum analogy of Poisson geometry, related dendriform algebras and Rota-Baxter operators}, {\em Lett. Math. Phys.} 85 (2008), no. 2-3, 91-109.

%\bibitem{uchino2}
%K. Uchino, \emph{Poisson geometry on associative algebras}, arXiv preprint arXiv:0710.4309 (2007).

\bibitem{voro} Th. Voronov, Higher derived brackets and homotopy algebras, {\em J. Pure Appl. Algebra} 202 (2005)133--153.

\bibitem{wu}
M. Wu, {Double constructions of compatible asociative algebras},  {\em Algebra Colloq.} 26. No. 03 (2019) 479-494.

\end{thebibliography}
\end{document}